\def\hB{\mathcal{B}}
\def\hG{\mathcal{G}}
\def\hL{\mathcal{L}}
\def\hQ{\mathcal{Q}}
\def\ff{\mathbb{F}}
\def\gg{\mathbb{G}}
\def\pp{\mathbb{P}}
\def\qq{\mathbb{Q}}
\def\rr{\mathbb{R}}
\DeclareMathOperator{\GF}{\mathsf{GF}}
\DeclareMathOperator{\PG}{\mathsf{PG}}
\DeclareMathOperator{\ch}{\mathsf{char}}
\DeclareMathOperator{\mult}{\mathsf{mult}}
\theoremstyle{plain}
\newtheorem{thm}{Theorem}
\newtheorem{lem}[thm]{Lemma}
\newtheorem{prop}[thm]{Proposition}
\newtheorem{cor}[thm]{Corollary}
\theoremstyle{definition}
\newtheorem{defi}[thm]{Definition}
\newtheorem{ex}[thm]{Example}
\newtheorem{rem}[thm]{Remark}
\newtheorem*{not*}{Notation}
\begin{document}

\title{Lines in higgledy-piggledy position}
\author{
  {Sz}abol{cs}~L.~Fan{cs}ali\thanks{This research was partially supported by the European COST Action IC1104}\\
  \small{MTA-ELTE Geometric~and~Algebraic~Combinatorics Research~Group}
\and
  P{\'e}ter~{Sz}iklai\thanks{This research was partially supported by the Bolyai Grant.}\\
  \small{MTA-ELTE Geometric~and~Algebraic~Combinatorics Research~Group}\\
  \small{ELTE, Institute~of~Mathematics, Department~of~Computer~Science}
}

\maketitle

\begin{abstract}
In this article, we examine sets of lines in $\PG(d,\ff)$ meeting each hyperplane in a generator set of points. We prove that such a set has to contain at least $1.5d$ lines if the field $\ff$ has more than $1.5d$ elements, and at least $2d-1$ lines if the field $\ff$ is algebraically closed. We show that suitable $2d-1$ lines constitute such a set (if $|\ff|\ge2d-1$), proving that the lower bound is tight over algebraically closed fields. At last, we will see that the strong $(s,A)$ subspace designs constructed by {Guruswami and Kopparty}~\cite{VGSK} have better (smaller) parameter $A$ than one would think at first sight.
\end{abstract}

\section{Introduction}

{H{\'e}ger, Patk{\'o}s and Tak{\'a}{ts}}~\cite{HTP} hunt for a set $\hG$ of points in the projective space $\PG(d,q)$ that `determines' all hyperplanes in the sense that the intersection $\Pi\cap\hG$ is \emph{individual} for each hyperplane $\Pi$.

A little different but similar problem is to find a set $\hG$ such that each hyperplane is \emph{spanned} by the intersection $\Pi\cap\hG$. Such a `generator set' is always a `determining set' since if all the intersections $\Pi\cap\hG$ span the hyperplanes $\Pi$ then they must be individual. {H{\'e}ger, Patk{\'o}s and Tak{\'a}{ts}} thus began to examine `generator sets'. In projective planes generator sets and two-fold blocking sets are the same, since two distinct points span the line connecting these points.

\begin{defi}[Multiple blocking set]
A set $\hB$ of points in the projective space $\pp$ is a $t$-fold \emph{blocking set} with respect to hyperplanes, if each hyperplane $\Pi\subset\pp$ meets $\hB$ in at least $t$ points. One can define $t$-fold blocking sets with respect to lines, planes, etc.~similarly.
\end{defi}

The definition of the $t$-fold blocking set does not say anything more about the intersections with hyperplanes. In a projective space of dimension $d\ge3$, a $d$-fold blocking set can intersect a hyperplane $\Pi$ in such a set of $d$ points which is contained in a proper subspace of $\Pi$. Thus (in higher dimensions), a natural \emph{specialization} of multiple blocking sets would be the following. (Since in higher dimension a projective space is always over a field, we use the special notation $\PG(d,\ff)$ instead of the general $\pp$.)

\begin{defi}[Generator set]
A set $\hG$ of points in the projective space $\PG(d,\ff)$ is a \emph{generator set} with respect to hyperplanes, if each hyperplane $\Pi\subset\PG(d,\ff)$ meets $\hG$ in a `generator system' of $\Pi$, that is, $\hG\cap\Pi$ spans $\Pi$, in other words this intersection is not contained in any hyperplane of $\Pi$. (Hyperplanes of hyperplanes are subspaces in $\PG(d,\ff)$ of co-dimension two.)
\end{defi}

\begin{ex}
In a projective plane $\PG(2,q^2)$ there exist two disjoint Baer-subgeometries. These together constitute a 2-fold blocking set, and thus, a generator set consisting of $2q^2+2q+2$ points.
\end{ex}

\begin{rem}
In $\PG(d,q^d)$, $d$ disjoint subgeometries of order $q$ together constitute a $d$-fold blocking set. But it is not obvious whether this example is only a $d$-fold blocking set or it could be also a generator set (if we choose the subgeometries in a proper way).
\end{rem}

{H{\'e}ger and Tak{\'a}{ts}} had the idea to search for generator set which is the union of some disjoint lines and {Patk{\'o}s} gave an example for such a `determining set' as the union of the points of $2d+2$ distinct lines, using probabilistic method. They gave the name `higgledy-piggledy' to the property of such sets of lines. We investigate their idea.

\section{Hyperplane-generating sets of lines}

The trivial examples for multiple blocking sets are the sets of disjoint lines: If $\hB$ is the set of points of $t$ disjoint lines then $\hB$ is a $t$-fold blocking set (with respect to hyperplanes). {H{\'e}ger, Patk{\'o}s and Tak{\'a}ts}~\cite{HTP} suggested to search generator sets in such a form. (Though there can exist smaller examples.)

Sets of $k$ disjoint lines are always multiple ($k$-fold) blocking sets (with respect to hyperplanes) but not always generator sets, so the following definition is not meaningless.

\begin{defi}[Generator set of lines]
A set $\hL$ of lines is a \emph{generator set} (with respect to hyperplanes), if the set $\bigcup\hL$ of all points of the lines contained by $\hL$ is a generator set with respect to hyperplanes.
\end{defi}

From now on, we will examine sets of lines of the property above.

\subsection{Examples in projective planes}

Let $\pp$ be an arbitrary (desarguesian or not, finite or infinite) projective plane and let $\ell_1$ and $\ell_2$ be two distinct lines and let $Q=\ell_1\cap\ell_2$ denote the meeting point. Each line $\ell$ of $\pp$ not containing $Q$ meets $\ell_1$ and $\ell_2$ in two distinct points, thus, $\ell$ is generated. Lines containing $Q$ meet $\ell_1$ and $\ell_2$ only in $Q$, so they are not generated. This shows that two lines cannot be in higgledy-piggledy position.

\begin{ex}[Triangle]
Let $\ell_3$ be an arbitrary line not containing $Q$. Other lines containig $Q$ meet $\ell_3$, thus, they are also generated by $\{\ell_1,\ell_2,\ell_3\}$. Thus, three lines in general position constitute a generator set in arbitrary projective plane.
\end{ex}

\begin{rem}
If $\pp$ has only three lines through a point (i.e. $\pp$ is the Fano plane), three concurrent lines also form a generator set.
\end{rem}

In the projective plane $\PG(2,q)$, a minimal generator set of lines contains three lines and thus $3q+3$ points. Whereas two disjoint Baer subplanes (containing only $2q+2\sqrt{q}+2$ points) together also constitute a generator set (of points) with respect to lines. This example shows that there can exist generator set (of points) with respect to hyperplanes, containing less points than the smallest generator set of lines.

\subsection{Examples in projective spaces of dimension three}

Let $\ell_1$, $\ell_2$, $\ell_3$ are pairwise disjoint lines in $\PG(3,\ff)$, and let $\hQ^+_3(\ff)$ be the (unique) hyperbolic quadric containing these lines. Each plane of $\PG(3,\ff)$ which is not a tangent plane of $\hQ^+_3(\ff)$ meets these three lines in non-collinear three points, thus it is generated. Let $\ell$ denote one of the opposite lines meeting $\ell_1$, $\ell_2$ and $\ell_3$. Planes through $\ell$ containing neither $\ell_1$, nor $\ell_2$, nor $\ell_3$ meet these lines in collinear points (on the opposite line $\ell$), and thus, they are not generated.

\begin{rem} The reader can show that if these three lines are not pairwise disjoint, they cannot constitute a generator set: See the planes through the meeting point of two lines.
\end{rem}

\begin{ex}[Over $\GF(q)$ and over $\rr$ or $\qq$]
If there exists a line $\ell_4$ disjoint to the hyperbolic quadric $\hQ^+_3(\ff)$, then each plane $\Pi$ not generated by $\{\ell_1,\ell_2,\ell_3\}$ (meeting them in three collinear points) meet $\ell_4$ in a point $Q_4$ not on the line of the three collinear meeting points $Q_i=\Pi\cap\ell_i$, thus, $\Pi$ is generated by $\{\ell_1,\ell_2,\ell_3,\ell_4\}$ .
\end{ex}

The example above does not exist if the field $\ff$ is algebraically closed since in this case the hyperbolic quadric $\hQ^+_3(\ff)$ meets every lines.

\begin{ex}[Over arbitrary field]
Let $\ell_4$ and $\ell_5$ be two lines meeting the hyperbolic quadric $\hQ^+_3(\ff)$ above in such a way that there is no opposite line $\ell$ meeting both $\ell_4$ and $\ell_5$. Planes through opposite lines not meeting $\ell_4$ are generated by $\{\ell_1,\ell_2,\ell_3,\ell_4\}$ and planes through opposite lines not meeting $\ell_5$ are generated by $\{\ell_1,\ell_2,\ell_3,\ell_5\}$. Thus, $\{\ell_1,\ell_2,\ell_3,\ell_4,\ell_5\}$ is a set of lines in higgledy-piggledy position.
\end{ex}

\subsection{Lower bound over arbitrary (large enough) fields}

At first, we try to give another equivalent definition to the `higgledy-piggledy' property of generator sets of lines. The following is not an equivalent but a sufficient condition. Although, in several cases it is also a necessary condition (if we seek minimal sets of this type), thus, it could effectively be considered as an almost-equivalent.

\begin{thm}[Sufficient condition]\label{thm:suff}
If there is no subspace of co-dimension two meeting each element of the set $\hL$ of lines then $\hL$ is a generator set with respect to hyperplanes.
\end{thm}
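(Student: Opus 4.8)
The plan is to prove the contrapositive. Suppose $\hL$ is \emph{not} a generator set with respect to hyperplanes; I will produce a subspace of co-dimension two meeting every line of $\hL$, contradicting the hypothesis. By definition, failure of the generator property means there is some hyperplane $\Pi\subset\PG(d,\ff)$ for which the intersection $\Pi\cap\bigcup\hL$ does not span $\Pi$, i.e.\ it is contained in some hyperplane of $\Pi$. Such a hyperplane of $\Pi$ is precisely a subspace $\Sigma$ of co-dimension two in $\PG(d,\ff)$, and by construction $\Pi\cap\bigcup\hL\subseteq\Sigma$.

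The key step is to verify that this single subspace $\Sigma$ meets \emph{every} line $\ell\in\hL$, via a dichotomy on the position of $\ell$ relative to $\Pi$. If $\ell\subseteq\Pi$, then every point of $\ell$ lies in $\Pi\cap\bigcup\hL$, hence in $\Sigma$, so $\ell$ meets (indeed lies in) $\Sigma$. Otherwise $\ell\not\subseteq\Pi$, and here I invoke the elementary projective fact that a line not contained in a hyperplane meets that hyperplane in exactly one point; this intersection point belongs to $\Pi\cap\bigcup\hL\subseteq\Sigma$, so again $\ell$ meets $\Sigma$. In either case $\ell\cap\Sigma\ne\emptyset$.

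Combining the two cases, $\Sigma$ is a subspace of co-dimension two meeting each element of $\hL$, which contradicts the assumption that no such subspace exists. Hence $\hL$ must be a generator set with respect to hyperplanes.

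Since the argument is a direct contrapositive, there is no serious analytic obstacle. The only points requiring care are bookkeeping ones: confirming that a hyperplane of the hyperplane $\Pi$ is genuinely a co-dimension-two flat of the ambient $\PG(d,\ff)$, so that it is an admissible candidate in the hypothesis; and not overlooking the degenerate case $\ell\subseteq\Pi$, where the whole line---not merely a crossing point---lands inside $\Sigma$. Both reduce to standard dimension counts for intersections of subspaces in projective space.
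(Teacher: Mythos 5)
Your proof is correct and follows essentially the same route as the paper's: take the hyperplane $\Pi$ witnessing the failure of the generator property, and observe that the hyperplane of $\Pi$ containing $\Pi\cap\bigcup\hL$ is a co-dimension-two subspace meeting every line of $\hL$. Your explicit dichotomy on whether $\ell\subseteq\Pi$ merely spells out what the paper compresses into ``the point(s) of intersection has (have) to be contained in $H$.''
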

\begin{proof}
Suppose that the set $\hL$ of lines is \emph{not} a generator set with respect to hyperplanes. Then there exists at least one hyperplane $\Pi$ that meets the elements of $\hL$ in a set $\Pi\cap\hL$ of points which is contained in a hyperplane $H$ of $\Pi$. Since $\Pi$ is a hyperplane it meets every line, thus each element of $\hL$ meets $\Pi$, but the point(s) of intersection has (have) to be contained in $H$. Thus the subspace $H$ (of co-dimension two) meets each element of $\hL$.
\end{proof}

The theorem above is a sufficient but not necessary condition. But if this condition above does not hold, then the set $\hL$ of lines could only be generator set in a very special way.

\begin{lem}\label{lem:sundiszno}
If the set $\hL$ of lines is a generator set with respect to hyperplanes and there exists a subspace $H$ of co-dimension two that meets each element of $\hL$ then $\hL$ has to contain at least as many lines as many points are contained in a projective line. (That is, $|\hL|\ge q+1$ if the field $\ff=\GF(q)$ and $\hL$ is infinite if the field $\ff$ is \emph{not finite}.)
\end{lem}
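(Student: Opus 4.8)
The plan is to count the hyperplanes through the fixed co-dimension-two subspace $H$ and to show that each of them forces a \emph{separate} line of $\hL$ into the configuration. First I would record the pencil structure: the hyperplanes containing $H$ are in bijection with the points of the quotient line $\PG(d,\ff)/H\cong\PG(1,\ff)$, so their number equals the number of points on a projective line --- namely $q+1$ when $\ff=\GF(q)$, and an infinite cardinality when $\ff$ is infinite. Moreover, two distinct hyperplanes $\Pi_1\neq\Pi_2$ of this pencil satisfy $\Pi_1\cap\Pi_2=H$, since both contain $H$ and $\dim(\Pi_1\cap\Pi_2)=2(d-1)-d=d-2=\dim H$.

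The key local observation concerns how a line $\ell\in\hL$ meets a hyperplane $\Pi\supseteq H$. By hypothesis $\ell\cap H\neq\emptyset$. If $\ell\not\subseteq\Pi$, then $\ell\cap\Pi$ is a single point; since $\ell\cap H\subseteq\ell\cap\Pi$, that point already lies in $H$, so such an $\ell$ contributes nothing to $\Pi\cap\bigcup\hL$ outside $H$. Consequently $\Pi\cap\bigcup\hL$ consists of points of $H$ together with the points of those lines of $\hL$ contained in $\Pi$. Now $H$, being of co-dimension two in $\PG(d,\ff)$ and contained in $\Pi$, is a hyperplane of $\Pi$; so if no line of $\hL$ lying in $\Pi$ left $H$, the intersection $\Pi\cap\bigcup\hL$ would sit inside the hyperplane $H$ of $\Pi$ and could not span $\Pi$, contradicting that $\hL$ is a generator set. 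Hence for every hyperplane $\Pi$ of the pencil there exists a line $\ell_\Pi\in\hL$ with $\ell_\Pi\subseteq\Pi$ but $\ell_\Pi\not\subseteq H$.

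Finally I would argue that the assignment $\Pi\mapsto\ell_\Pi$ is injective. If a single line $\ell\not\subseteq H$ were contained in two distinct hyperplanes $\Pi_1,\Pi_2$ of the pencil, then $\ell\subseteq\Pi_1\cap\Pi_2=H$, a contradiction. Thus distinct hyperplanes of the pencil are served by distinct lines of $\hL$, whence $|\hL|$ is at least the number of hyperplanes through $H$, which is the number of points on a projective line. This gives $|\hL|\ge q+1$ over $\GF(q)$ and forces $\hL$ to be infinite when $\ff$ is infinite.

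I do not foresee a genuine obstacle; the one step deserving care is verifying that a line not contained in $\Pi$ meets $\Pi$ only inside $H$, so that the spanning requirement is thrown entirely onto lines lying within $\Pi$. Once this is in place, the injectivity of $\Pi\mapsto\ell_\Pi$ and the pigeonhole count over the pencil are immediate.
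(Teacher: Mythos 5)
Your proof is correct and follows essentially the same route as the paper: both arguments run over the pencil of hyperplanes through $H$ (the paper parametrizes it by the points of a line disjoint from $H$, you by the quotient $\PG(d,\ff)/H$), extract from the generator property a line of $\hL$ lying in each such hyperplane but not in $H$, and conclude injectivity from $\Pi_1\cap\Pi_2=H$. Your write-up merely spells out in more detail why a line of $\hL$ not contained in $\Pi$ contributes only points of $H$; this is implicit in the paper's one-line justification.
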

\begin{proof}
Let $\ell$ be a line not intersecting $H$. For each point $P_i\in\ell$ there exists a hyperplane $\Pi_i$ containing $H$ and meeting $P_i$. For each such hyperplane $\Pi_i$ there exists a line $\ell_i\in\hL$ that meets $\Pi_i$ not only in $H$, thus $\ell_i\subset\Pi_i$. Two distinct hyperplanes $\Pi_i$ and $\Pi_j$ intersect in $H$ thus the lines $\ell_i$ and $\ell_j$ have to be different lines.
\end{proof}

If we seek minimal size generator sets (and the field $\ff$ has more than $1.5d$ elements where $d$ is the dimension) we can suppose the condition of Theorem~\ref{thm:suff}, so we seek minimal size set of lines such that no subspace of co-dimension two meets each line.

\begin{lem}\label{lem:lower}
If the set $\hL$ of lines in $\PG(d,\ff)$ has at most $\left\lfloor\frac{d}{2}\right\rfloor+d-1$ elements then there exists a subspace $H$ of co-dimension two meeting each line in $\hL$.
\end{lem}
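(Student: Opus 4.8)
The plan is to realize the codimension-two subspace $H$ explicitly as an intersection $\ker\phi_1\cap\ker\phi_2$ of two hyperplanes, given by linear functionals $\phi_1,\phi_2$ on the underlying vector space $V=\ff^{d+1}$, and to translate the requirement ``$H$ meets $\ell$'' into an algebraic condition on the pair $(\phi_1,\phi_2)$. If a line $\ell\in\hL$ is spanned by the points $[a],[b]$, then $H$ meets $\ell$ exactly when the $2\times2$ determinant $\phi_1(a)\phi_2(b)-\phi_1(b)\phi_2(a)$ vanishes: this determinant is zero iff the linear map $v\mapsto(\phi_1(v),\phi_2(v))$ from the $2$-dimensional space $\langle a,b\rangle$ to $\ff^2$ has a nontrivial kernel, i.e.\ some point of $\ell$ lies on both hyperplanes. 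The first step is to record this determinantal reformulation for every line in $\hL$.

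The key idea --- and the source of the bound $\lfloor d/2\rfloor+d-1$ --- is to choose $\phi_1$ and $\phi_2$ in two stages so that each determinant condition becomes \emph{linear}. First I would select $\lfloor d/2\rfloor$ of the lines (all of them, if $|\hL|$ is smaller) and pick $\phi_1\neq0$ vanishing on the span of these lines; this is possible because that span has dimension at most $2\lfloor d/2\rfloor\le d<\dim V$, so a nonzero annihilating functional exists. For a line $\ell\subseteq\ker\phi_1=\Pi_1$ one has $\ell\cap\ker\phi_2\subseteq\Pi_1\cap\ker\phi_2=H$, and since in $\PG(d,\ff)$ every line meets every hyperplane, such an $\ell$ meets $H$ automatically, regardless of $\phi_2$. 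Thus these $\lfloor d/2\rfloor$ lines are disposed of for free.

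With $\phi_1$ now fixed, the remaining lines impose only \emph{linear} conditions on $\phi_2$: for $\ell_j=\langle a_j,b_j\rangle$ the determinant condition reads $\phi_2\bigl(\phi_1(a_j)\,b_j-\phi_1(b_j)\,a_j\bigr)=0$, i.e.\ $\phi_2$ must vanish on one explicit vector $w_j\in V$. There are at most $(\lfloor d/2\rfloor+d-1)-\lfloor d/2\rfloor=d-1$ such lines, hence at most $d-1$ linear conditions on $\phi_2\in V$, and since $\dim V=d+1$ the solution space has dimension at least $2$. The final step is to pick $\phi_2$ inside this solution space but outside $\langle\phi_1\rangle$ (possible precisely because the solution space has dimension at least $2>1$), guaranteeing that $H=\ker\phi_1\cap\ker\phi_2$ is genuinely of codimension two and meets every line of $\hL$.

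The step I expect to carry the real weight is the two-stage splitting itself. A naive count --- each line is one condition on the $(2d-2)$-dimensional Grassmannian of codimension-two subspaces --- would suggest a threshold near $2d-2$, but that reasoning needs an intersection of Schubert-type hypersurfaces to be nonempty, which over a finite or otherwise non-closed field can fail. The present argument sidesteps this entirely: by absorbing $\lfloor d/2\rfloor$ lines into $\Pi_1$ through containment and linearising the rest, everything reduces to a homogeneous linear system, which always has the required nonzero solution over \emph{any} field $\ff$. The mild technical points to verify are that $w_j\neq0$ exactly when $\ell_j\not\subseteq\Pi_1$ (so that no condition is spuriously degenerate) and that $\phi_2$ can indeed be chosen independent of $\phi_1$.
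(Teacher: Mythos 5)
Your proof is correct and is essentially the paper's own argument in dual, linear-algebraic clothing: both enclose $\lfloor d/2\rfloor$ lines in a hyperplane $\Pi=\ker\phi_1$ (possible since their span has projective dimension at most $2\lfloor d/2\rfloor-1\le d-1$), and both then pass a hyperplane of $\Pi$ through the $d-1$ intersection points of the remaining lines with $\Pi$ --- your vectors $w_j=\phi_1(a_j)b_j-\phi_1(b_j)a_j$ are precisely those intersection points, and the condition $\phi_2(w_j)=0$ with $\phi_2\notin\langle\phi_1\rangle$ is exactly the paper's choice of a hyperplane $H$ of $\Pi$ containing all the $P_i$. The dimension counts agree, so no changes are needed.
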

\begin{proof}
Let $\ell_1,\dots,\ell_{\lfloor\frac{d}{2}\rfloor}$ and $\ell_{\lfloor\frac{d}{2}\rfloor+i}$ ($1\le i\le d-1$) denote the elements of $\hL$.
There exists a subspace of dimension $2\left\lfloor\frac{d}{2}\right\rfloor-1$ containing the lines $\ell_1,\dots,\ell_{\lfloor\frac{d}{2}\rfloor}$ (if these lines are contained in a less dimensional subspace, it can be extended). If $d$ is even, this subspace is a hyperplane $\Pi$. If $d$ is odd, this subspace has co-dimension two, and thus it can be extended to a hyperplane $\Pi$. The hyperplane $\Pi$ meets each line, thus let $P_i\in\Pi\cap\ell_{\lfloor\frac{d}{2}\rfloor+i}$ for $i=1,\dots,d-1$. There exists a hyperplane $H$ of $\Pi$ that contains each point $P_i$ above. (If these points would be not in general position, that is not a problem.) The subspace $H$ has co-dimension two in $\PG(d,\ff)$ and it meets the lines $\ell_1,\dots,\ell_{\lfloor\frac{d}{2}\rfloor}$ since these lines are contained in $\Pi$ and $H$ is a hyperplane of $\Pi$, and $H$ meets the other lines since the meeting points are the points $P_i$.
\end{proof}

\begin{thm}[Lower bound]
If the field $\ff$ has at least $\left\lfloor\frac{d}{2}\right\rfloor+d$ elements, then a generator set $\hL$ of lines in $\PG(d,\ff)$ has to contain at least $\left\lfloor\frac{d}{2}\right\rfloor+d$ elements.
\end{thm}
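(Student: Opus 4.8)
The plan is to argue by contradiction, combining the two preceding lemmas directly. Suppose that $\hL$ is a generator set of lines in $\PG(d,\ff)$ with strictly fewer than $\lfloor d/2\rfloor+d$ elements, so that $|\hL|\le\lfloor d/2\rfloor+d-1$. This is precisely the hypothesis of Lemma~\ref{lem:lower}, which then hands us a subspace $H$ of co-dimension two meeting every line of $\hL$.

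Next I would feed this configuration into Lemma~\ref{lem:sundiszno}. By assumption $\hL$ is a generator set with respect to hyperplanes, and we have just exhibited a co-dimension-two subspace $H$ meeting each of its lines; these are exactly the two premises of that lemma. It therefore forces $|\hL|\ge q+1$ in the finite case $\ff=\GF(q)$, and forces $\hL$ to be infinite when $\ff$ is not finite.

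Finally I would extract the contradiction from the size hypothesis on $\ff$. If $\ff$ is finite with $q=|\ff|\ge\lfloor d/2\rfloor+d$, then $|\hL|\ge q+1\ge\lfloor d/2\rfloor+d+1$, which is incompatible with $|\hL|\le\lfloor d/2\rfloor+d-1$. If instead $\ff$ is infinite (which also meets the hypothesis), then Lemma~\ref{lem:sundiszno} makes $\hL$ infinite, again contradicting its finiteness. In both cases the assumption collapses, so $|\hL|\ge\lfloor d/2\rfloor+d$.

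Since each step is a verbatim appeal to an already-proved lemma, I do not expect a genuine obstacle; the statement is essentially a corollary of Lemmas~\ref{lem:lower} and~\ref{lem:sundiszno}. The only points that warrant a moment's attention are verifying that the threshold $|\ff|\ge\lfloor d/2\rfloor+d$ is large enough to push $q+1$ strictly past the bound $\lfloor d/2\rfloor+d-1$ supplied by Lemma~\ref{lem:lower}, and handling the infinite-field case separately, where Lemma~\ref{lem:sundiszno} yields infinitude rather than a numerical inequality.
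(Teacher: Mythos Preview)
Your proposal is correct and follows exactly the paper's approach: the paper's own proof is the single sentence ``Lemma~\ref{lem:sundiszno} and Lemma~\ref{lem:lower} together give the result,'' and you have simply spelled out how the two lemmas combine, including the case split between finite and infinite fields.
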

\begin{proof}
Lemma~\ref{lem:sundiszno} and Lemma~\ref{lem:lower} together give the result.
\end{proof}

The examples in $\PG(2,q)$ and $\PG(3,q)$ show that this lower bound is tight in small dimensions ($d\le3$) over finite fields, and over $\rr$ and over $\qq$.

\begin{rem}
As in $\PG(2,2)$ three lines through a point are also in `higgledy-piggledy' position, four proper lines having a common transversal line meeting them can be in higgledy-piggledy position in $\PG(3,3)$.
\end{rem}

\section{Gra{ss}mann varieties}

The sufficient condition is an intersection-property of some subspaces. Such properties can naturally be handled using Gra{ss}mann varieties and Pl{\"u}cker co-ordinates. The original (hyperplane generating) property can also be translated to the language of Pl{\"u}cker co-ordinates.

Let $\gg(m,n,\ff)$ or simply $\gg(m,n)$ denote the Gra{ss}mannian of the linear subspaces of dimension $m$ and co-dimension $n$ in the vector space $\ff^{m+n}$, or, in other aspect $\gg(m,n)$ is the set of all projective subspaces of dimension $m-1$ (and co-dimension $n$) in $\PG(m+n-1,\ff)$. Via `Pl{\"u}cker embedding' we can identify this Gra{ss}mannian to the set of one dimensional linear subspaces of $\bigwedge^m\ff^{m+n}$ generated by totally decomposable multivectors, that is, $\gg(m,n)\subset\PG\left(\bigwedge^m\ff^{m+n}\right)\equiv\PG(\binom{m+n}{m}-1,\ff)$ is an algebraic variety of dimension $mn$.

The canonical isomorphism $\bigwedge^m\ff^{m+n}\equiv\bigwedge^n\ff^{m+n}$ defines a bijection between $\gg(m,n)$ and $\gg(n,m)$. Thus, the Gra{ss}mannian of subspaces of co-dimension two can be considered as the Gra{ss}mannian of the lines of the dual projective space.

\begin{rem}
If $m=2$ or $n=2$ then the Pl{\"u}cker co-ordinate vectors can be considered as alternating matrices:
$L_{ij}=a_ib_j-a_jb_i$ where $L=a\wedge b$.
\end{rem}

\begin{prop}
Let $\{L(1),\dots,L(k)\}$ denote the set of the Pl{\"u}cker \mbox{co-or}\-di\-nate vectors representing the elements of the set $\hL$ of $k$ lines in $\PG(d,\ff)$. There exists a subspace $H$ of co-dimension two in $\PG(d,\ff)$ meeting each element of $\hL$ \emph{if and only if} the subspace $L(1)^{\bot}\cap\dots\cap L(k)^{\bot}\le\PG(\binom{d+1}{2}-1,\ff)$ meets the Gra{ss}mann variety $\gg(d-1,2)$, that is, the equation system
\begin{align*}
\sum_{i<j}L_{ij}(1)H_{ij}&=0&\sum_{i<j}L_{ij}(2)H_{ij}&=0&\dots&&\sum_{i<j}L_{ij}(k)H_{ij}&=0
\end{align*}
together with the quadratic Pl{\"u}cker relations (for each quadruple $i_1i_2i_3i_4$ of indices)
\begin{equation*}
H_{i_1i_2}H_{i_3i_4}-H_{i_1i_3}H_{i_2i_4}+H_{i_1i_4}H_{i_2i_3}=0
\end{equation*}
has nontrivial solutions for $H_{ij}$.
\end{prop}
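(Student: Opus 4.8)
The plan is to reduce the whole statement to a single bilinear \emph{incidence criterion} between a line and a subspace of co-dimension two, and then read off both implications from it. Recall that a line $\ell\in\hL$ is a two-dimensional linear subspace $\langle a,b\rangle\le\ff^{d+1}$ with Pl\"ucker coordinates $L_{ij}=a_ib_j-a_jb_i$, while a subspace $H$ of co-dimension two is an intersection $\ker\phi\cap\ker\psi$ of two hyperplanes. Under the duality $\gg(d-1,2)\equiv\gg(2,d-1)$ recorded above, such an $H$ corresponds to the line $\langle\phi,\psi\rangle$ of the dual space, whose Pl\"ucker coordinates $H_{ij}=\phi_i\psi_j-\phi_j\psi_i$ are again indexed by the pairs $i<j$.

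First I would establish the incidence criterion. The line $\ell$ meets $H$ precisely when some nonzero combination $\lambda a+\mu b$ lies in $\ker\phi\cap\ker\psi$, i.e.\ when the system
$$\lambda\,\phi(a)+\mu\,\phi(b)=0,\qquad \lambda\,\psi(a)+\mu\,\psi(b)=0$$
has a nontrivial solution $(\lambda,\mu)$, which happens if and only if its determinant vanishes:
$$\phi(a)\psi(b)-\phi(b)\psi(a)=0.$$
A short expansion in coordinates identifies this left-hand side with the Pl\"ucker pairing $\sum_{i<j}L_{ij}H_{ij}$. Hence $\ell$ meets $H$ if and only if $H$, viewed as a point of $\PG(\binom{d+1}{2}-1,\ff)$, lies on the hyperplane $L^{\bot}$.

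With this in hand both implications are immediate. A vector $(H_{ij})$ is the Pl\"ucker coordinate vector of an actual subspace of co-dimension two exactly when it is totally decomposable, which by the standard description of the Pl\"ucker embedding is equivalent to satisfying the quadratic Pl\"ucker relations, i.e.\ to lying on $\gg(d-1,2)$. Thus, if a subspace $H$ of co-dimension two meets every $\ell_r$, then its coordinate vector is a nontrivial solution of the Pl\"ucker relations (being decomposable) and of each linear equation $\sum_{i<j}L_{ij}(r)H_{ij}=0$ (by the incidence criterion), so it represents a point of $L(1)^{\bot}\cap\dots\cap L(k)^{\bot}\cap\gg(d-1,2)$. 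Conversely, any nontrivial solution of the full system is a decomposable vector, hence the Pl\"ucker vector of some subspace $H$ of co-dimension two, and the linear equations then force $\ell_r\cap H\neq\emptyset$ for every $r$.

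The one thing to get right is the identification of the coordinates of $H$ with a vector indexed by the same pairs $ij$ as the lines, so that incidence becomes the naive pairing $\sum_{i<j}L_{ij}H_{ij}$ and the symbol $\bot$ refers to this bilinear form; this is exactly the duality $\gg(d-1,2)\equiv\gg(2,d-1)$ recorded above. The remaining ingredients---the $2\times2$ determinant computation and the fact that the quadratic relations cut out precisely the decomposable multivectors---are routine, so I expect no serious obstacle beyond careful bookkeeping of signs and indices.
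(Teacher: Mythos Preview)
Your proposal is correct and follows essentially the same route as the paper: identify $H$ with a dual line via $\gg(d-1,2)\equiv\gg(2,d-1)$, prove the incidence criterion by computing the $2\times2$ determinant $\phi(a)\psi(b)-\phi(b)\psi(a)$ and recognising it as $\sum_{i<j}L_{ij}H_{ij}$, and invoke that the Pl\"ucker relations cut out precisely the decomposable vectors. The only cosmetic difference is that the paper cites a reference for this last fact, whereas you treat it as standard.
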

\begin{proof}
According to~\cite[Theorem~3.1.6.]{SCH}, the Pl{\"u}cker relations completely determine the Gra{ss}mannian (moreover, they generate the ideal of polynomials vanishing on it). In case $n=2$, the Pl{\"u}cker relations found in~\cite[Subsection~3.1.3.]{SCH} reduces to the form $H_{i_1i_2}H_{i_3i_4}-H_{i_1i_3}H_{i_2i_4}+H_{i_1i_4}H_{i_2i_3}=0$ for the quadruples $i_1i_2i_3i_4$ of indices. Since we consider the Gra{ss}mannian $\gg(d-1,2)$ of subspaces of co-dimension two as the Gra{ss}mannian $\gg(2,d-1)$ of lines of the dual space, the Pl{\"u}cker relations determining $\gg(d-1,2)$ are the same (using dual co-ordinates).

Let $a,b\in\ff^{d+1}$ be the homogeneous co-ordinate vectors of two projective points in $\PG(d,\ff)$ and let $x,y\in\ff^{d+1}$ be the homogeneous (dual) co-ordinate vectors of two hyperplanes in $\PG(d,\ff)$. The line connecting $\pp(a)$ and $\pp(b)$ is defined by the Pl{\"u}cker co-ordinate vector $a\wedge b\in\gg(2,d-1)$. The subspace of co-dimension two defined by the Pl{\"u}cker co-ordinate vector $x\wedge y\in\gg(d-1,2)$ is the intersection of the hyperplanes $x^{\bot}$ and $y^{\bot}$.

The line co-ordinatized by $L=a\wedge b$ and the subspace co-ordinatized by $H=x\wedge y$ meet each other if and only if the scalar product $\langle x\wedge y|a\wedge b\rangle=\langle x|a\rangle\langle y|b\rangle-\langle x|b\rangle\langle y|a\rangle$ equals to zero.

Finally, $\sum_{i<j}H_{ij}L_{ij}=\sum_{i<j}(a_ib_j-a_jb_i)(x_iy_j-x_jy_i)=\sum_{i\neq j}(a_ix_i)(b_jy_j)-\sum_{i\neq j}(a_jy_j)(b_ix_i)=\langle x|a\rangle\langle y|b\rangle-\langle x|b\rangle\langle y|a\rangle=\langle x\wedge y|a\wedge b\rangle$.
\end{proof}

\subsection{Tangents of the moment curve}\label{subs:tang}

Let $\{(1,t,t^2,\dots,t^d):t\in\ff\}\cup\{(0,0,0,\dots,1)\}\subset\PG(d,\ff)$ be the moment curve (rational normal curve) and let $\ell_t$ denote its tangent line in the point $(1,t,t^2,\dots,t^d)$, and $\ell_{\infty}$ is the tangent line in the point $(0,\dots,0,1)$ at infinity.

At first, compute the Pl{\"u}cker co-ordinates of these tangent lines. The Pl{\"u}cker co-ordinate vector of $\ell_t$ is $L(t)=a(t)\wedge\big(a(t)+\dot{a}(t)\big)=a(t)\wedge\dot{a}(t)$ where $a(t)=(1,t,t^2,t^3\dots,t^d)$ is the point of the curve ($a_i(t)=t^i$) and its derivate $\dot{a}(t)=(0,1,2t,3t^2\dots,dt^{d-1})$ is the direction (the ideal point in infinity) of the tangent line $\ell_t$. In matrix representation:
\begin{equation*}
L(t)=\left[\begin{matrix}
\phantom{-}0\phantom{t} & 1 & 2t &\ldots&(d\!-\!1)t^{d-2} & \phantom{(-1)}dt^{d-1}\\
-1\phantom{t} & 0 & t^2 &\ldots& (d\!-\!2)t^{d-1} & (d\!-\!1)t^{d\phantom{-1}}\\
-2t & -t^2 & 0 &\ldots& (d\!-\!3)t^{d\phantom{-1}} & (d\!-\!2)t^{d+1}\\
\vdots&\vdots&\vdots&\ddots&\vdots&\vdots\\
(1\!-\!d)t^{d-2}&(2\!-\!d)t^{d-1}&(3\!-\!d)t^{d\phantom{+1}}&\ldots& 0& t^{2d-2}\\
\phantom{1}(-d)t^{d-1}&(1\!-\!d)t^{d\phantom{-1}}& (2\!-\!d)t^{d+1}&\ldots&-t^{2d-2}&0\\
\end{matrix}\right]
\end{equation*}
That is, $L_{ij}(t)=a_i(t)\dot{a}_j(t)-\dot{a}_i(t)a_j(t)=t^{i}jt^{j-1}-t^{j}it^{i-1}=(j-i)t^{i+j-1}$ where $0\le i,j\le d$.

\begin{rem}
One can see that in suitable positions the Pl{\"u}cker co-ordinate vector $L(t)$ has the co-ordinates: $1,t^2,t^4,t^6,\dots,t^{2d-2}$ and the co-ordinates: $2t,2t^3,\dots,2t^{2d-3}$, thus, if $\ch\ff\neq2$, then the set $\{L(t_i):i=0,\dots,2d-2\}$ is linearly independent ($t_i\neq t_j$ if $i\neq j$).
\end{rem}

\begin{lem}\label{lem:ind}
If either $\ch\ff=p>d$ and $|\ff|\ge2d-1$ or $\ch\ff=0$, then there does \emph{not} exist any subspace of co-dimension two meeting each tangent line $\ell_t$ of the moment curve.
\end{lem}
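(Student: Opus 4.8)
The plan is to use the rank-two description of codimension-two subspaces together with the scalar-product identity obtained at the end of the proof of the Proposition, which turns the incidence condition into the vanishing of a Wronskian. A nonzero element of $\gg(d-1,2)$ is a decomposable bivector $H=x\wedge y$ with $x,y\in\ff^{d+1}$ linearly \emph{independent}, representing the subspace $x^{\bot}\cap y^{\bot}$; so the lemma asks me to rule out independent $x,y$ whose subspace meets every $\ell_t$. Applying the identity $\sum_{i<j}H_{ij}L_{ij}=\langle x|a\rangle\langle y|b\rangle-\langle x|b\rangle\langle y|a\rangle$ with $a=a(t)$ and $b=\dot a(t)$, the subspace $H$ meets $\ell_t$ precisely when
\[
\langle x|a(t)\rangle\langle y|\dot a(t)\rangle-\langle x|\dot a(t)\rangle\langle y|a(t)\rangle=0.
\]

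First I would encode $x,y$ as the polynomials $X(t)=\langle x|a(t)\rangle=\sum_{i=0}^{d}x_it^{i}$ and $Y(t)=\sum_{i=0}^{d}y_it^{i}$, each of degree at most $d$. Since $\dot a(t)$ is the termwise derivative of $a(t)$, one has $\langle x|\dot a(t)\rangle=X'(t)$ and $\langle y|\dot a(t)\rangle=Y'(t)$, so the incidence condition for every $t$ reads $X(t)Y'(t)-X'(t)Y(t)=0$, i.e.\ the Wronskian of $X$ and $Y$ vanishes on all of $\ff$. Expanding it as $\sum_{i,j}x_iy_j(j-i)t^{i+j-1}$, the only term that could contribute degree $t^{2d-1}$ forces $i=j=d$ and hence carries the factor $j-i=0$; thus the Wronskian has degree at most $2d-2$. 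As it vanishes at the at least $2d-1$ points $t\in\ff$ (or at infinitely many points when $\ch\ff=0$), it is identically the zero polynomial.

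It then remains to deduce linear dependence of $X$ and $Y$ from $XY'-X'Y\equiv0$. After dividing out $\gcd(X,Y)$ I may assume $X,Y$ coprime; then $X\mid X'Y$ gives $X\mid X'$, which for a nonconstant $X$ of degree $m$ forces $X'=0$, since otherwise $\deg X'\ge\deg X=m$ would contradict $\deg X'=m-1$. Hence $X$ is constant, and symmetrically $Y$, so $X$ and $Y$ are dependent. Pulling back along the linear isomorphism $x\mapsto X$ between $\ff^{d+1}$ and the polynomials of degree at most $d$, the vectors $x,y$ are dependent, i.e.\ $H=x\wedge y=0$, the desired contradiction. The hard part is exactly this last step in positive characteristic: the implications $\deg X'=\deg X-1$ and $X'=0\Rightarrow X$ constant both break down once the degree reaches $p$, and it is precisely the hypothesis $\ch\ff=p>d\ge\deg X$ (or $\ch\ff=0$) that keeps the Wronskian criterion valid. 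The field-size assumption $|\ff|\ge2d-1$ enters only in the separate role of providing enough roots to force the degree-$(2d-2)$ Wronskian to vanish identically.
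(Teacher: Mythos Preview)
Your proof is correct and takes a genuinely different route from the paper's. The paper works directly in Pl{\"u}cker coordinates: from $\sum_{i<j}H_{ij}L_{ij}(t)\equiv0$ it extracts $2d-1$ linear equations in the $H_{ij}$ (one for each power of $t$), and then runs a two-sided induction on $N=i+j$ that alternates these linear equations with the quadratic Pl{\"u}cker relations $H_{ij}H_{kl}-H_{ik}H_{jl}+H_{il}H_{jk}=0$ to force every $H_{ij}$ to vanish. You instead exploit the decomposability $H=x\wedge y$ up front, rewrite the incidence condition as the Wronskian identity $XY'-X'Y\equiv0$ for the two degree-$\le d$ polynomials $X(t)=\langle x|a(t)\rangle$ and $Y(t)=\langle y|a(t)\rangle$, and then invoke the classical Wronskian criterion for linear dependence, which is valid precisely because $\ch\ff=0$ or $\ch\ff>d\ge\deg X,\deg Y$. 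Your argument is shorter and more conceptual, and it isolates cleanly where each hypothesis is used; the paper's coordinate approach, on the other hand, is what makes the later generalization to ``diverted tangents'' in Subsection~\ref{subs:div} transparent, since that proof only needs the coefficients $j-i$ in the linear equations to be nonzero and can simply replay the induction with $\varphi(j)-\varphi(i)$ in their place. One minor expositional point: after dividing by $\gcd(X,Y)$ you show the \emph{reduced} polynomials are constants, hence the \emph{original} $X,Y$ are both scalar multiples of that gcd and therefore dependent; only then does the pull-back along the isomorphism $x\mapsto X$ apply to the original vectors $x,y$.
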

\begin{proof}
Suppose to the contrary that there exists a subspace $H$ of {co-di}\-men\-sion two meeting each tangent line $\ell_t$. Let $H_{ij}$ ($0\le i<j\le d$) denote the (dual) Pl{\"u}cker co-ordinates of $H$. For these Pl{\"u}cker co-ordinates we have Pl{\"u}cker relations $H_{i_1i_2}H_{i_3i_4}-H_{i_1i_3}H_{i_2i_4}+H_{i_1i_4}H_{i_2i_3}=0$ for all quadruple $i_1i_2i_3i_4$ of indices.

The indirect assumpion means that $\sum_{i<j}H_{ij}L_{ij}(t)=0$ for all $t\in\ff$.
\begin{align*}
\sum_{i<j}H_{ij}L_{ij}(t)=\sum_{i=0}^{d-1}\sum_{j=i+1}^{d}H_{ij}(j-i)t^{i+j-1}&=
\sum_{N=1}^{d}t^{N-1}\sum_{i=0}^{\lfloor\frac{N}{2}\rfloor}(N-2i)H_{i,N-i}\quad+\\
&+\sum_{N=d+1}^{2d-1}t^{N-1}\sum_{i=1}^{d-\lfloor\frac{N}{2}\rfloor}(N-2i)H_{i,N-i}
\end{align*}
Since the field $\ff$ has more than $2d-2$ elements, this polynomial above can vanish on each element of $\ff$ only if $\sum_{i}(N-2i)H_{i,N-i}=0$ for all $N<2d$. So we have $2d-1$ new (linear) equations for the Pl{\"u}cker co-ordinates:
\begin{align*}
H_{0,1}&=0\tag{1}\\
2H_{0,2}&=0\tag{2}\\
3H_{0,3}+H_{1,2}&=0\tag{3}\\
4H_{0,4}+2H_{1,3}&=0\tag{4}\\
5H_{0,5}+3H_{1,4}+H_{2,3}&=0\tag{5}\\
6H_{0,6}+4H_{1,5}+2H_{2,4}&=0\tag{6}\\
&\;\;\vdots\\
dH_{0,d}+(d-2)H_{1,d-1}+\dots+\left(\left\lceil\tfrac{d}{2}\right\rceil-\left\lfloor\tfrac{d}{2}\right\rfloor\right)H_{\lfloor\frac{d}{2}\rfloor,\lceil\frac{d}{2}\rceil}&=0\mbox{\tag{$d$}}\\
&\;\;\vdots\\
3H_{d-3,d}+H_{d-2,d-1}&=0\mbox{\tag{$2d-3$}}\\
2H_{d-2,d}&=0\mbox{\tag{$2d-2$}}\\
H_{d-1,d}&=0\mbox{\tag{$2d-1$}}\\
\end{align*}
Notice that in equations (1), (2),\dots, ($N$), the Pl{\"u}cker co-ordinates $H_{ij}$ occur with indices $0\le i<j\le N-i$, if $N<d$. Similarly, in equations ($2d-1$), ($2d-2$), \dots, ($2d-N$) the Pl{\"u}cker co-ordinates occur with indices \mbox{$2d-N-j\le i<j\le d$}, if $N<d$.

Using these equations and the Pl{\"u}cker relatios, we can prove by induction, that all Pl{\"u}cker co-ordinates $H_{ij}$ are zero, and thus, they are not the homogeneous co-ordinates of any subspace $H$. We do two inductions, one for $N=1,\dots,d$ (increasing) and another (decreasing) one for $N'=(2d-N)=2d-1,\dots,d+1$. Remember that $\ch\ff=0$ or $\ch\ff>d$, so the nonzero integers in these equations are nonzero elements of the prime field of $\ff$.

\paragraph{Increasing induction}
The first two equations say that $H_{01}=H_{02}=0$. Suppose by induction that we have $H_{ij}=0$ for each pair $(i,j)$ where $0\le i<j \le N-i$, where $N$ is a positive integer less than $d$. Using this assumption, we prove that $H_{0,N+1}=H_{1,N}=H_{2,N-1}=\dots=0$, and thus $H_{ij}=0$ for each pair $(i,j)$ where $0\le i<j \le N+1-i$.

Equation~($N+1$) says that a linear combination of $H_{0,N+1}$, $H_{1,N}$, $H_{2,N-1}$, \dots, $H_{\lfloor\frac{N+1}{2}\rfloor,\lceil\frac{N+1}{2}\rceil}$ is zero. Let $H_{ij}$ and $H_{kl}$ be two arbitrary element among these above. We have the Pl{\"u}cker relation $H_{ij}H_{kl}-H_{ik}H_{jl}+H_{il}H_{jk}=0$. Using the assumption $H_{ij}=0$ for $i<j\le N-i$, this Pl{\"u}cker relation is reduced to $H_{ij}H_{kl}=0$.

Thus, these Pl{\"u}cker relations say that all $H_{ij}$ (among $H_{0,N+1}$, $H_{1,N}$, \dots, $H_{\lfloor\frac{N+1}{2}\rfloor,\lceil\frac{N+1}{2}\rceil}$) should be zero except one. And the linear Equation~($N+1$) says that this one cannot be exception either.

\paragraph{Decreasing induction}
The decreasing induction, started with the last two equations $H_{d-1,1}=H_{d-2,d}=0$ is similar.

\paragraph{}
So we have proved that each Pl{\"u}cker co-ordinate of the subspace $H$ of co-dimension two should be zero, that is a contradiction, since Pl{\"u}cker co-ordinates are homogeneous.
\end{proof}

\begin{thm}
If either $\ch\ff=p>d$ and $|\ff|\ge2d-1$ or $\ch\ff=0$, then arbitrary $2d-1$ distinct tangent lines $\ell_t$ together constitute a generator set with respect to hyperplanes.
\end{thm}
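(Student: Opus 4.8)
The plan is to reduce everything to the Sufficient Condition (Theorem~\ref{thm:suff}): to prove that $2d-1$ distinct tangent lines form a generator set, it is enough to show that \emph{no} subspace of co-dimension two meets all $2d-1$ of them. The work of Lemma~\ref{lem:ind} already isolates the right mechanism, namely that the quantity $t\mapsto\sum_{i<j}H_{ij}L_{ij}(t)$ is a polynomial in $t$ of degree at most $2d-2$. The new observation is that such a polynomial is forced to vanish identically by only $2d-1$ of its values, so I never need the meeting condition on the whole pencil $\{\ell_t:t\in\ff\}$ — just on the chosen $2d-1$ lines. (Note that $2d-1$ distinct tangents are available exactly because $|\ff|\ge 2d-1$.)

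First I would assume for contradiction that a co-dimension-two subspace $H$, with dual Plücker co-ordinates $H_{ij}$, meets each chosen line. To treat the possible tangent $\ell_\infty$ on the same footing as the finite tangents, I would homogenise the parameter by writing $t=u/v$ and clearing denominators, turning the Plücker vector into the binary form $L_{ij}(u,v)=(j-i)\,u^{i+j-1}v^{2d-1-i-j}$ of degree $2d-2$, with $\ell_\infty$ sitting at $(u,v)=(1,0)$. The meeting conditions then assert that the single binary form $q(u,v)=\sum_{i<j}H_{ij}L_{ij}(u,v)$, of degree $2d-2$, vanishes at the $2d-1$ distinct points of $\pp^1$ picked out by the chosen tangents.

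Since a nonzero binary form of degree $2d-2$ has at most $2d-2$ zeros in $\pp^1(\ff)$, the form $q$ must be identically zero. Equating its coefficients to zero recovers \emph{verbatim} the linear system $(1)$--$(2d-1)$ of Lemma~\ref{lem:ind}: the coefficient of $u^{N-1}v^{2d-1-N}$ is precisely $\sum_i(N-2i)H_{i,N-i}$. From this point I would simply rerun the increasing and decreasing inductions of that proof, combining each linear equation with the quadratic Plücker relations $H_{ij}H_{kl}-H_{ik}H_{jl}+H_{il}H_{jk}=0$ to clear one anti-diagonal of co-ordinates at a time, and conclude that every $H_{ij}=0$, which is impossible for homogeneous co-ordinates. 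Theorem~\ref{thm:suff} then finishes the argument.

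The one genuinely delicate point I expect is the uniform handling of $\ell_\infty$, where the degree count must be airtight; it is reassuring that the coefficient of the top term $u^{2d-2}$ equals exactly $H_{d-1,d}$, so the form really does have degree at most $2d-2$ and the $2d-1$ vanishing conditions overdetermine it regardless of whether $\ell_\infty$ is among the chosen lines. (If one restricts to finitely parametrised tangents, the homogenisation is unnecessary and the plain polynomial-of-degree-$(2d-2)$-with-$2d-1$-roots argument applies directly.) The hypotheses $\ch\ff=0$ or $\ch\ff>d$ enter exactly as in Lemma~\ref{lem:ind}, ensuring that the integers $(j-i)$ and $(N-2i)$ are nonzero in $\ff$, which is what both the degree bound and the inductive clearing require.
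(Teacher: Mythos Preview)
Your proposal is correct and follows essentially the same route as the paper: assume a co-dimension-two subspace $H$ meets all chosen tangents, observe that the resulting polynomial in $t$ of degree $\le 2d-2$ with $2d-1$ roots must vanish identically, and then invoke the content of Lemma~\ref{lem:ind} to force all $H_{ij}=0$. The only addition is your homogenisation step to accommodate $\ell_\infty$ among the chosen tangents, which the paper's own proof does not address explicitly; this is a minor refinement rather than a different method.
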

\begin{proof}
Let $\{\ell_{t_i}:i=1,2,\dots,2d-1\}$ be an arbitrary set of $2d-1$ tangent lines of the rational normal curve. It is enough to prove that there is no subspace $H$ of co-dimension two meeting each element of this set.

Suppose to the contrary that there exists such a subspace $H$ and let $H_{ij}$ be the Pl{\"u}cker co-ordinates of it. Since $H$ meets each line $\ell_{t_i}$, this means $\sum_{i<j}H_{ij}L_{ij}(t_k)=0$ for all $t_k$, $k=1,\dots,2d-1$. Thus, the polynomial $\sum_{i=0}^{d-1}\sum_{j=i+1}^{d}H_{ij}(j-i)t^{i+j-1}$ has $2d-1$ roots, but its degree is at most $2d-2$. So, if there exists such a subspace $H$ of co-dimension two, the polynomial above is the zero polynomial, and thus, $H$ meets each tangent line $\ell_t$, contradicting Lemma~\ref{lem:ind}.
\end{proof}

These results above require the characteristic $\ch\ff$ to be greater than the dimension $d$ (or to be zero). However, we can generalize these results over small prime characteristics.

\subsection{Small prime characteristics: `diverted tangents'}\label{subs:div}

The only weakness of the proof of Lemma~\ref{lem:ind} (which can be ruined by small prime characteristic) is the linear equation system for the Pl{\"u}cker co-ordinates $H_{ij}$. The Pl{\"u}cker co-ordinate $H_{ij}$ has coefficient $j-i\mod{p}$ and this could be zero for $j\neq i$ if the characteristic $p$ is not greater than the dimension $d$.

\begin{rem}
If the characteristic of $\ff$ equals to the dimension $d$, then there exists exactly one subspace of co-dimension two that meets each tangent $\ell_t$ of the moment curve. The Pl{\"u}cker co-ordinates of this subspace should be all zero except one: $H_{0,d}$. This subspace $H$ thus can be get as the intersection of two hyperplanes co-ordinatized by $[1,0,\dots,0]$ (the ideal hyperplane) and $[0,\dots,0,1]$.

In higher dimension there will be more such subspaces, and thus, their intersection is a subspace of codimension more than two, meeting each tangent line.
\end{rem}

If we substitute the coefficients $(j-i)$ by nonzero elements, the proof of Lemma~\ref{lem:ind} will be valid over arbitrary characteristic. Remember that the Pl{\"u}cker co-ordinates of the tangent line $\ell_t$ are $L_{ij}(t)=(j-i)t^{i+j-1}$ and the coefficient $(j-i)$ comes from here.

\begin{not*}
Let $\varphi:\{0,1,\dots,d\}\rightarrow\ff$ be an arbitrary \emph{injection}. If $|\ff|\le d$, such an injection there does not exist, but, if $\ff$ has more than $d$ elements, such a $\varphi$ \emph{does} exist, independently from the characteristic. For convenience sake, we suppose that $\varphi(0)=0$ and $\varphi(1)=1$.

Let $a(t)=(1,t,t^2,\dots,t^d)$ again denote the affine points of the moment curve ($a_i(t)=t^i$), and let $b(t)=(0,1,\varphi(2)t,\dots,\varphi(d)t^{d-1})$ denote the points of a special curve in the ideal hyperplane, defined by $b_j(t)=\varphi(j)t^{j-1}$.
\end{not*}

\begin{defi}[Diverted tangent lines]
Consider the line $\ell'_t$ connecting $a(t)$ and $b(t)$ instead of the tangent line $\ell_t$ of the moment curve in the point $a(t)$. The Pl{\"u}cker co-ordinate vector of the `\emph{diverted} tangent line' $\ell'_t$ is $L'(t)=a(t)\wedge b(t)$.
\begin{equation*}
L'_{ij}(t)=a_i(t)b_j(t)-b_i(t)a_j(t)=\big(\varphi(j)-\varphi(i)\big)t^{i+j-1}
\end{equation*}
Diverted tangent lines depend on the injection $\varphi$.
\end{defi}

\begin{rem}
If $\ch\ff$ is zero, the injection $\varphi$ can be the identity, and if $\ch\ff=p>d$, the injection $\varphi$ can be defined by $\varphi(k)\equiv k\mod{p}$. In these cases the diverted tangent line $\ell'_t$ determined by $\varphi$ equals to the actual tangent line $\ell_t$ of the moment curve.
\end{rem}

\begin{thm}
If $|\ff|\ge2d-1$, then arbitrary $2d-1$ distinct \emph{diverted tangent lines} $\ell'_{t_1},\dots,\ell'_{t_{2d-1}}$ (determined by arbitrary injection $\varphi$) together constitute a generator set with respect to hyperplanes.
\end{thm}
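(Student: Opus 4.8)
The plan is to follow the proof of the corresponding theorem for genuine tangent lines, replacing the coefficient $(j-i)$ by $\bigl(\varphi(j)-\varphi(i)\bigr)$ everywhere and checking that injectivity of $\varphi$ rescues the one step that small characteristic had spoiled. By Theorem~\ref{thm:suff} it is enough to show that no subspace of co-dimension two meets all $2d-1$ of the chosen diverted tangent lines. So suppose, for contradiction, that a subspace $H$ with (dual) Pl\"ucker co-ordinates $H_{ij}$ meets each $\ell'_{t_k}$. By the Pl\"ucker-co-ordinate criterion established above this means $\sum_{i<j}H_{ij}L'_{ij}(t_k)=0$ for $k=1,\dots,2d-1$. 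Since $L'_{ij}(t)=\bigl(\varphi(j)-\varphi(i)\bigr)t^{i+j-1}$ with $0\le i<j\le d$, the polynomial
\[
p(t)=\sum_{i<j}H_{ij}L'_{ij}(t)=\sum_{i=0}^{d-1}\sum_{j=i+1}^{d}\bigl(\varphi(j)-\varphi(i)\bigr)H_{ij}\,t^{i+j-1}
\]
has degree at most $2d-2$ yet vanishes at the $2d-1$ distinct points $t_1,\dots,t_{2d-1}$; here the hypothesis $|\ff|\ge2d-1$ is used, and it forces $p$ to be the zero polynomial.

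Next I would collect the terms of $p$ of equal degree. Writing $N=i+j$, the coefficient of $t^{N-1}$ is $\sum_i\bigl(\varphi(N-i)-\varphi(i)\bigr)H_{i,N-i}$, the inner sum running over the admissible indices (those $i$ with $\max(0,N-d)\le i<N-i\le d$). As $p\equiv0$, every such coefficient vanishes, producing exactly the same system of $2d-1$ linear equations on the $H_{ij}$ as in Lemma~\ref{lem:ind}, with the sole change that the integer coefficient $(N-2i)$ is replaced by $\bigl(\varphi(N-i)-\varphi(i)\bigr)$. In particular the triangular index pattern is unchanged: equations $(1),\dots,(N)$ involve only co-ordinates $H_{ij}$ with $0\le i<j\le N-i$, and symmetrically for the topmost equations.

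The decisive point --- and the precise place where small characteristic broke the original proof --- is that each new coefficient $\varphi(N-i)-\varphi(i)$ is nonzero. Indeed $\varphi$ is an injection and every term has $i<j=N-i$, so $i\neq N-i$ gives $\varphi(i)\neq\varphi(N-i)$, independently of $\ch\ff$. With all coefficients invertible, the increasing and decreasing inductions of Lemma~\ref{lem:ind} apply verbatim: at level $N+1$, any two surviving co-ordinates $H_{ij},H_{kl}$ obey a Pl\"ucker relation $H_{ij}H_{kl}-H_{ik}H_{jl}+H_{il}H_{jk}=0$ whose latter two products vanish by the inductive hypothesis, whence $H_{ij}H_{kl}=0$; so at most one of the level-$(N+1)$ co-ordinates is nonzero, and the now genuinely nontrivial linear equation $(N+1)$ eliminates that last one. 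Running this up from $N=1,\dots,d$ and symmetrically down from $N'=2d-1,\dots,d+1$ forces every $H_{ij}=0$, contradicting the homogeneity of the Pl\"ucker co-ordinates of $H$.

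I expect the only genuine bookkeeping to be verifying that the admissible index ranges in $p$ coincide with those of Lemma~\ref{lem:ind}, so that the two inductions transfer without modification; the conceptual heart, non-vanishing of the coefficients, is immediate from injectivity. Note finally that $|\ff|\ge2d-1\ge d+1$ (for $d\ge2$) is exactly what guarantees both the existence of an injection $\varphi\colon\{0,1,\dots,d\}\to\ff$ and the availability of $2d-1$ distinct scalars $t_k$ exceeding the degree $2d-2$, so the single hypothesis discharges both roles it is called upon to play.
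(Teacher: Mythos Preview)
Your proof is correct and follows essentially the same approach as the paper: reduce to the Pl\"ucker-co-ordinate criterion, observe that the degree-$(2d-2)$ polynomial $p(t)$ vanishes at $2d-1$ distinct points and hence identically, and then rerun the inductions of Lemma~\ref{lem:ind} with the coefficients $(j-i)$ replaced by the nonzero quantities $\varphi(j)-\varphi(i)$. Your write-up is in fact somewhat more explicit than the paper's in spelling out why the inductive step transfers and why the single hypothesis $|\ff|\ge 2d-1$ suffices for both the injection $\varphi$ and the $2d-1$ distinct parameters.
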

\begin{proof}
Suppose to the contrary that the subspace $H$ meets the diverted tangent lines $\ell'_{t_1},\dots,\ell'_{t_{2d-1}}$, that is, $\sum_{i<j}H_{ij}L'_{ij}(t_k)=0$ for all $k=1,\dots,2d-1$. Thus, the polynomial $\sum_{i=0}^{d-1}\sum_{j=i+1}^{d}H_{ij}\big(\varphi(j)-\varphi(i)\big)t^{i+j-1}$ has $2d-1$ roots, but its degree is at most $2d-2$. So, the polynomial above is the zero polynomial, and thus, $H$ meets each connecting line $\ell'_t$ ($t\in\ff$), that is,
\begin{align*}
\sum_{i<j}H_{ij}L_{ij}(t)=\sum_{i=0}^{d-1}\sum_{j=i+1}^{d}H_{ij}\big(\varphi(j)-\varphi(i)\big)t^{i+j-1}&=0&\forall
t\in\ff
\end{align*}
Now, we can repeat the proof of Lemma~\ref{lem:ind} by substituting the coefficients $(j-i)$ by $\big(\varphi(j)-\varphi(i)\big)$ in the linear equations (1), (2), \dots, ($2d-1$), and since $\varphi$ is injective, these coefficients are nonzero. Thus, we can prove that each Pl{\"u}cker co-ordinate $H_{ij}$ should be zero, which is a contradiction.
\end{proof}

We have proved that over arbitrary (large enough) field we can construct a hyperplane-generating set of lines of size $2d-1$. In the next section, we will prove that it is the smallest one if the field is algebraically closed.

\subsection{Lower bound over algebraically closed fields}

Over an algebraically closed field, the set $\hL$ of lines could be a generator set only if the condition of Theorem~\ref{thm:suff} holds.

\begin{lem}
{\rm\cite[Corollary~3.2.14 and Subsection~3.1.1]{SCH}}
The dimension of the Gra{ss}mannian as an algebraic variety is $\dim\gg(m,n)=mn$ and its degree is
\begin{equation*}
\deg\gg(m,n)=\frac{0!1!\dots(n\!-\!1)!}{m!(m\!+\!1)!\dots(m\!+\!n\!-\!1)!}\big(mn\big)!
\end{equation*}
In particular, the Gra{ss}mann variety $\gg(2,d-1)$ of the lines of $\PG(d,\ff)$ has dimension $2(d-1)=2d-2$ and its degree is $\frac{1}{2d-1}\binom{2d-1}{d}>0$.\qed
\end{lem}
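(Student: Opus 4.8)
This is a classical fact about Grassmannians, cited here from \cite{SCH}; my plan is to recall the standard argument, which splits into the dimension count, the general degree formula, and the arithmetic specialisation to $\gg(2,d-1)$.

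For the dimension I would use the standard affine atlas. Fixing a set $S$ of $m$ pivot columns, the $m$-dimensional subspaces of $\ff^{m+n}$ whose projection onto the coordinates in $S$ is an isomorphism are represented uniquely by $m\times(m+n)$ matrices in reduced row echelon form with the identity block on $S$; the remaining $m\times n$ entries are free. Each such chart is therefore an affine space of dimension $mn$, these charts (over all choices of $S$) cover $\gg(m,n)$, and the variety is irreducible, so $\dim\gg(m,n)=mn$.

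For the degree in the Pl\"ucker embedding I would pass to the Chow (or cohomology) ring and compute the top self-intersection $\deg\gg(m,n)=\int\sigma_1^{mn}$ of the special Schubert class $\sigma_1$ (the hyperplane class), the exponent being $\dim\gg(m,n)=mn$. Iterating Pieri's rule identifies $\sigma_1^{mn}$ with the number of saturated chains in Young's lattice from the empty diagram up to the full $m\times n$ rectangle, i.e.\ with the number of standard Young tableaux of rectangular shape $m\times n$. The hook length formula for the rectangle then turns this count into the closed product $\frac{0!1!\cdots(n-1)!}{m!(m+1)!\cdots(m+n-1)!}(mn)!$. The only genuine work is this last collapse: one checks that the product of the hook lengths $m+n+1-i-j$ over the cells $(i,j)$ of the rectangle equals $\prod_{i=0}^{n-1}(m+i)!\big/\prod_{i=0}^{n-1}i!$. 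I expect this hook-length simplification to be the most error-prone step, although it is purely bookkeeping; the one conceptual ingredient is the Schubert-calculus identity $\int\sigma_1^{mn}=\#\{\text{standard Young tableaux of shape }m\times n\}$.

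Finally I specialise to $m=2$, $n=d-1$ (the lines of $\PG(d,\ff)$). The dimension is immediately $2(d-1)=2d-2$. Substituting into the product formula gives $\frac{0!1!\cdots(d-2)!}{2!3!\cdots d!}(2d-2)!$; cancelling the common factors $2!,3!,\dots,(d-2)!$ from numerator and denominator leaves denominator $(d-1)!\,d!$ and numerator $(2d-2)!$ (since $0!\,1!=1$), so the degree is $\frac{(2d-2)!}{(d-1)!\,d!}$, the Catalan number $C_{d-1}$. The one-line rewrite $\frac{(2d-2)!}{(d-1)!\,d!}=\frac{1}{2d-1}\cdot\frac{(2d-1)!}{d!\,(d-1)!}=\frac{1}{2d-1}\binom{2d-1}{d}$ produces the stated form, and positivity is automatic since the quantity counts tableaux. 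Everything apart from the Schubert-calculus identity is elementary.
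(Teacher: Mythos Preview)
Your argument is correct and is the standard one: the affine-chart dimension count, the identification of the degree with the top self-intersection $\sigma_1^{mn}$, the Pieri/tableaux interpretation, the hook-length evaluation for the $m\times n$ rectangle, and the Catalan specialisation for $m=2$ all go through as you describe, and your arithmetic $\frac{(2d-2)!}{(d-1)!\,d!}=\frac{1}{2d-1}\binom{2d-1}{d}$ is right.

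There is nothing to compare against, however: the paper does not prove this lemma at all. It is stated with a terminal \qedsymbol{} and an explicit citation to \cite[Corollary~3.2.14 and Subsection~3.1.1]{SCH}, i.e.\ it is quoted as a black box from Manivel's book and used only to guarantee that a generic linear subspace of codimension $2d-2$ in the Pl\"ucker space still meets $\gg(d-1,2)$. So your write-up supplies strictly more than the paper does; if anything, for the paper's purposes you could shorten it to the citation plus the one-line specialisation to $m=2$, $n=d-1$.
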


Remember that an algebraic surface $\gg\subset\pp$ of \emph{dimension $n$} and a projective subspace $S\le\pp$ of \mbox{\emph{co-dimension $n$}} always meet over an algebraically closed field.

\begin{thm}
Over algebraically closed field $\ff$, if the set $\hL$ of lines in $\PG(d,\ff)$ has at most $2d-2$ elements, then there exists a subspace $H$ in $\PG(d,\ff)$ of co-dimension two that meets each element of $\hL$, and thus, $\hL$ is \emph{not} a generator set.
\end{thm}
\begin{proof}
Suppose that $\hL=\{L(1),\dots,L(2d-2)\}$ has exactly $2d-2$ elements (if not, we can extend it). The subspace $L(1)^{\bot}\cap\dots\cap L({2d-2})^{\bot}$ has co-di\-men\-sion at most $2d-2$ in  $\PG(\binom{d+1}{2}-1,\ff)$. The Gra{ss}mannian $\gg(d-1,2)$ of the 2-co-dimensional subspaces of $\PG(d,\ff)$ has dimension $2(d-1)=2d-2$
and its degree is $\frac{1}{2d-1}\binom{2d-1}{d}>0$.

Thus, $L(1)^{\bot}\cap\dots\cap L({2d-2})^{\bot}\cap\gg(d-1,2)$ contains at least $\frac{1}{2d-1}\binom{2d-1}{d}\ge1$ elements, which are subspaces of co-dimension two meeting the lines in $\hL$.
\end{proof}

\begin{cor}
Over algebraically closed field $\ff$, arbitrary $2d-1$ distinct \emph{diverted} tangent lines $\ell'_t$ in $\PG(d,\ff)$ constitute a generator set of minimal size. Thus, over algebraically closed fields the lower bound $2d-1$ is tight.
\end{cor}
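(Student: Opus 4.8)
The plan is to read the corollary as an immediate synthesis of the two results just proved: the construction giving $2d-1$ diverted tangent lines that form a generator set, and the lower bound forbidding generator sets of size at most $2d-2$ over an algebraically closed field. So the work is almost entirely a matter of checking that the hypotheses of both statements are met and then combining their conclusions.

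First I would verify the existence (upper) direction. An algebraically closed field is infinite, so in particular $|\ff|\ge 2d-1$ and $|\ff|>d$; the latter guarantees that an injection $\varphi\colon\{0,1,\dots,d\}\to\ff$ exists, hence the diverted tangent lines $\ell'_t$ are well defined. The hypothesis $|\ff|\ge 2d-1$ of the theorem on diverted tangent lines is therefore satisfied, and I would invoke that theorem to conclude that arbitrary $2d-1$ distinct diverted tangent lines constitute a generator set with respect to hyperplanes. This exhibits a generator set of lines of size $2d-1$.

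Next I would establish minimality by appealing to the lower bound theorem over algebraically closed fields: any set of at most $2d-2$ lines in $\PG(d,\ff)$ is met by some subspace of co-dimension two, and that theorem concludes such a set is \emph{not} a generator set. Consequently no generator set of lines can contain fewer than $2d-1$ elements.

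Combining the two directions yields the corollary: the $2d-1$ diverted tangent lines form a generator set, and $2d-1$ is the smallest possible size, so this family is of minimal size and the lower bound $2d-1$ is tight over algebraically closed fields. Rather than a genuine obstacle, the only point demanding care is the verification that algebraic closure supplies both field-size conditions simultaneously ($|\ff|>d$ so that $\varphi$ exists, and $|\ff|\ge 2d-1$ so that the construction theorem applies); once these are recorded, no further argument is needed.
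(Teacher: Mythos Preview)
Your proposal is correct and matches the paper's intent: the corollary is stated without proof precisely because it is an immediate combination of the construction theorem for diverted tangent lines (requiring $|\ff|\ge 2d-1$) and the just-proved lower bound over algebraically closed fields. Your only addition is the explicit check that an algebraically closed field is infinite, which is a reasonable point of care but not something the paper spells out.
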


\section{The Guruswami--Kopparty constructions}

In their very recent work~\cite{VGSK}, {Venkatesan Guruswami and Swastik Kopparty} construct subspace designs.

\begin{defi}[Weak subspace design]
{\rm\cite[Definition~2]{VGSK}}
A collection of subspaces $H_1,\dots,H_M\subset\ff_q^{d+1}$ is called a weak $(s,A)$ subspace design if for every $q$-linear subspace $W\subset\ff_q^{d+1}$ of dimension $s$, the number of indices $i$ for which $\dim_q(H_i\cap W)>0$ is at most $A$.
\end{defi}

A collection of at most $A$ subspaces would always be a weak $(s,A)$ subspace design, so the definition is not meaningless only if the subspace design contains at least $A+1$ subspaces.

\begin{defi}[Strong subspace design]
{\rm\cite[Definition~3]{VGSK}}
A collection of subspaces $H_1,\dots,H_M\subset\ff_q^{d+1}$ is called a strong $(s,A)$ subspace design if for every $q$-linear subspace $W\subset\ff_q^{d+1}$ of dimension $s$, the sum $\sum_{i=1}^M\dim_q(H_i\cap W)$ is at most $A$.
\end{defi}

Every strong $(s,A)$ subspace design is also a weak $(s,A)$ subspace design, and every weak $(s,A)$ subspace design is also a strong $(s,sA)$ subspace design. The main theorem of~\cite{VGSK} is the following.

\begin{thm}[Guruswami--Kopparty]\label{GKmain}
{\rm\cite[Theorem~7]{VGSK}}
For all positive integers $s,r,t,m=d+1$ and prime powers $q$ satisfying $s\le t\le d+1<q$, there is an explicit collection of $M=\Omega\left(\frac{q^r}{rt}\right)$ linear subspaces $H_1,\dots,H_M\subset\ff_q^{d+1}$, each of co-dimension $rt$, which forms a strong $\left(s,\frac{d\cdot s}{r\cdot (t-s+1)}\right)$ subspace design.
\end{thm}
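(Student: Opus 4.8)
The plan is to identify $\ff_q^{d+1}$ with the polynomials of degree $\le d$ over $\ff_q$, to define each $H_i$ by a ``folded evaluation'' condition, and then to bound $\sum_i\dim_q(H_i\cap W)$ by the degree of a single Wronskian-type determinant. First I would fix a primitive element $\gamma\in\ff_q^{\ast}$, whose multiplicative order is $q-1>d$ — this is the only place the hypothesis $d+1<q$ enters — and write $f\in\ff_q^{d+1}$ as $f(X)=\sum_{k=0}^{d}f_kX^k$. For the base case $r=1$ I would cut $\ff_q^{\ast}=\{1,\gamma,\dots,\gamma^{q-2}\}$ into $M=\Omega(q/t)$ consecutive blocks of length $t$ and let $H_i$ consist of the polynomials vanishing on the $i$-th block $\{\gamma^{it},\dots,\gamma^{it+t-1}\}$; since $t\le d+1$ these $t$ evaluation points are distinct, so $H_i$ has co-dimension $t$, and the blocks are pairwise disjoint, which will matter below. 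For a test space $W$ of dimension $s$ with basis $g_1,\dots,g_s$, the number $\dim_q(H_i\cap W)$ is exactly the dimension of the space of combinations of the $g_j$ vanishing on the $i$-th window.

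The heart of the argument is the folded Wronskian
\[
\mathcal{W}(X)=\det\bigl(g_j(\gamma^{k}X)\bigr)_{0\le k\le s-1,\ 1\le j\le s},
\]
governed by two facts. \emph{Non-vanishing:} if $g_1,\dots,g_s$ are $\ff_q$-independent and $\operatorname{ord}(\gamma)>d$, then $\mathcal{W}\not\equiv0$; this is the $\gamma$-shift analogue of the classical Wronskian criterion, and the large order of $\gamma$ is indispensable here. \emph{Degree:} each entry has degree $\le d$, so $\deg\mathcal{W}\le ds$. The link to the design is that $\mathcal{W}(x_0)=0$ precisely when some nonzero $g\in W$ vanishes on the length-$s$ window $x_0,\gamma x_0,\dots,\gamma^{s-1}x_0$. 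Hence if a nonzero $g\in W$ vanishes on a full length-$t$ block, it vanishes on each of its $t-s+1$ length-$s$ sub-windows, forcing $\mathcal{W}$ to vanish at the $t-s+1$ points $\gamma^{it},\dots,\gamma^{it+t-s}$; upgrading this from $\dim_q(H_i\cap W)=1$ to $\dim_q(H_i\cap W)=\rho$ by reading off root multiplicities of $\mathcal{W}$, each block contributes at least $\rho\,(t-s+1)$ roots. As distinct blocks occupy disjoint point sets these roots are distinct, so comparing with $\deg\mathcal{W}\le ds$ gives $\sum_i\dim_q(H_i\cap W)\le\frac{ds}{t-s+1}$, which is the case $r=1$.

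For general $r$ the natural route is to boost the base construction by interleaving $r$ shifted geometric progressions (equivalently, by building the analogous folded objects over $\ff_{q^r}$): this yields $M=\Omega(q^r/(rt))$ subspaces of co-dimension $rt$, and running the same count against a generalized $rs\times rs$ folded Wronskian replaces the gain $(t-s+1)$ per deficiency by $r(t-s+1)$ while keeping the degree at $O(ds)$, producing the stated $A=\frac{ds}{r(t-s+1)}$. The step I expect to be the main obstacle is the sharp form of the two Wronskian facts: proving non-vanishing of the (generalized) folded Wronskian for linearly independent polynomials, where the $\gamma$-shift replaces differentiation so the characteristic-zero proof must be reworked using $\operatorname{ord}(\gamma)>d$, together with the multiplicity bookkeeping that converts $\dim_q(H_i\cap W)=\rho$ into $\rho\,(t-s+1)$ honest roots. (I would also note that the degree bound $\deg\mathcal{W}\le ds$ is wasteful — the true degree is smaller — which is exactly the slack that lets one improve $A$ beyond the value stated here.) Structurally this is the same ``a nonzero polynomial of bounded degree has few roots'' mechanism already used for the diverted tangents in Lemma~\ref{lem:ind}, now applied to $\mathcal{W}$ rather than to $\sum_{i<j}H_{ij}L'_{ij}(t)$.
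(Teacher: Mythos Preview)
The paper does not itself prove Theorem~\ref{GKmain}: it is quoted from \cite{VGSK}, and the surrounding subsections only outline the construction while saying explicitly ``We do not repeat the proof here.'' Your sketch is essentially the Guruswami--Kopparty argument that the paper summarises: identify $\ff_q^{d+1}$ with polynomials of degree $\le d$, let each $H_\alpha$ be the polynomials vanishing on a length-$t$ geometric progression $\alpha,\alpha\omega,\dots,\alpha\omega^{t-1}$, and control $\sum_\alpha\dim_q(H_\alpha\cap W)$ via the degree of the folded Wronskian $L(X)=\det\bigl(P_j(\omega^{k}X)\bigr)$. Your multiplicity bookkeeping is the correct mechanism (after a basis change, a $\rho$-dimensional intersection makes $\rho$ entire columns of $A(x_0)$ vanish at each of the $t-s+1$ sub-window starts, so multilinearity of the determinant gives a factor $(X-x_0)^\rho$ in $L$), and your closing remark about the slack in $\deg L\le ds$ is precisely the paper's own observation, sharpened there to $\deg L\le ds-\binom{s}{2}$.

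One inaccuracy in your general-$r$ paragraph: the determinant stays an $s\times s$ folded Wronskian, not $rs\times rs$. The extra factor $r$ in the denominator does not come from a larger matrix but from the fact that the evaluation points $\alpha\omega^i$ now lie in $\ff_{q^r}$ with $\ff_q(\alpha)=\ff_{q^r}$: since the basis polynomials $P_j$ have $\ff_q$-coefficients, $P_j(\alpha\omega^i)=0$ forces $P_j(\alpha^{q^l}\omega^i)=0$ for every Frobenius conjugate $\alpha^{q^l}$, so each block contributes $\rho\cdot r(t-s+1)$ distinct roots of $L$ rather than $\rho(t-s+1)$. This is exactly how the paper (following \cite{VGSK}) packages things via the sets $S_\alpha=\{\alpha^{q^j}\omega^i:0\le j<r,\ 0\le i<t\}$ and the requirement that $|S_\alpha|=rt$ with the $S_\alpha$ pairwise disjoint.
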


\subsection{Relation to higgledy-piggledy lines}

If we dualize our problem (to find a minimal collection of lines such that no subspace of co-dimension two intersects all of them) and use linear terminology instead of projective one, we want to find a collection of subspaces $L_1,\dots,L_N$ of co-dimension two having the property that for every 2-dimensional subspace (projective line) $H$, at most $N\!-\!1$ of the $L_i$'s intersect $H$ non-trivially. So, we seek a weak $(2,N\!-\!1)$ subspace design of $N$ subspaces of co-dimension two, where $N$ is minimal.

\begin{rem}
If we have a weak $(s,A)$ subspace-design of $M$ subspaces \mbox{($M>A$)}, then any $A+1$ subspaces among them constitute a weak $(s,A)$ subspace design. Thus, if we have a weak $(2,N\!-\!1)$ subspace design of $M\ge N$ subspaces of co-dimension two, we will also have a set of $N$ lines in higgledy-piggledy position.
\end{rem}

We are interested in $(2,N\!-\!1)$ subspace designs containing subspaces of co-dimension two, thus $s=2=rt$, and thus $r=1$ and $t=2$. In this case the Guruswami--Kopparty~Theorem~\ref{GKmain} gives a strong $(2,2d)$ subspace design containing $M>\mathrm{const}\cdot q$ subspaces of co-dimension two. If $M>2d$, this design (after dualization) gives us a set of $2d+1$ lines in higgledy-piggledy position.

Watching the Guruswami--Kopparty constructions~\cite[Sections~4--5]{VGSK} with both eyes, we can behold the fact that these constructions yield a little bit stronger version of Theorem~\ref{GKmain}. This will be shown in the following two subsections.

\subsection{Construction of~\cite[Section~4]{VGSK}}

The main result of~\cite{VGSK} is based on the following construction. We will use $d$ instead of $m-1$. Let $s\le t\le d+1<q$ and $r$ be positive integer parameters and identify $\ff_q^{d+1}$ with the $\ff_q$-linear subspace of polynomials of degree $\le d$ in $\ff_q[X]$ and let $\omega$ denote a generator of $\ff_q^*$. For $\alpha\in\ff_{q^r}$, let $S_\alpha\subseteq\ff_{q^r}$ be given by
\begin{equation*}
S_\alpha=\{\alpha^{q^j}\omega^i\,|\,0\le j<r,0\le i<t\}.
\end{equation*}
Let $\mathcal{F}\subseteq\ff_{q^r}$ be a large set such that:
\begin{itemize}
\item For each $\alpha\in\mathcal{F}$: $\ff_q(\alpha)=\ff_{q^r}$.
\item For $\alpha\neq\beta\in\mathcal{F}$: $S_\alpha\cap S_\beta=\emptyset$.
\item Each $S_\alpha$ has cardinality $rt$.
\end{itemize}
For each $\alpha\in\mathcal{F}$ let
\begin{equation*}
H_\alpha=\{P(X)\in\ff_q^{d+1}\,|\,P(\alpha\cdot\omega^i)=0:\forall i=0,1,\dots,t-1\}
\end{equation*}

\begin{thm}[Guruswami--Kopparty]\label{thm:GKRS}
{\rm\cite[Theorem~14]{VGSK}}
Using the notation above, the collection $\{H_\alpha|\alpha\in\mathcal{F}\}$ is a strong $\left(s,\frac{d\cdot s}{r\cdot (t-s+1)}\right)$ subspace design.
\end{thm}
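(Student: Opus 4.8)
The plan is to encode the whole collection into a single \emph{folded Wronskian} and bound its degree. Fix an $s$-dimensional subspace $W\subseteq\ff_q^{d+1}$ with an $\ff_q$-basis $P_1,\dots,P_s$ (polynomials of degree $\le d$), and set
\begin{equation*}
D(\gamma)=\det\big(P_i(\gamma\omega^{j-1})\big)_{1\le i,j\le s}.
\end{equation*}
Since $\omega\in\ff_q$, we have $D\in\ff_q[\gamma]$, and each entry has degree $\le d$ in $\gamma$, so $\deg D\le sd$. Writing $e_\alpha=\dim_q(H_\alpha\cap W)$, the strong-design inequality $\sum_{\alpha\in\mathcal{F}}e_\alpha\le\frac{ds}{r(t-s+1)}$ will follow once I exhibit a divisor of $D$ of degree $r(t-s+1)\sum_\alpha e_\alpha$, provided $D\not\equiv0$.

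First I would produce the factors coming from a single $\alpha\in\mathcal{F}$. Fix $\alpha$ and choose the basis so that $P_1,\dots,P_{e_\alpha}$ span $H_\alpha\cap W$; as $D$ is only multiplied by a nonzero scalar under a change of basis, divisibility statements about $D$ are insensitive to this choice. For $0\le m\le t-s$ and $1\le j\le s$ the exponent $m+j-1$ stays inside $\{0,\dots,t-1\}$, so each of the first $e_\alpha$ rows, evaluated at $\gamma=\alpha\omega^m$, has all its entries $P_i(\alpha\omega^{m+j-1})$ equal to zero. Hence each such row, as a vector of polynomials in $\gamma$, is divisible by $(\gamma-\alpha\omega^m)$, and multilinearity of the determinant gives $(\gamma-\alpha\omega^m)^{e_\alpha}\mid D$. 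Letting $m$ run over $0,\dots,t-s$ yields $\prod_{m=0}^{t-s}(\gamma-\alpha\omega^m)^{e_\alpha}\mid D$.

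Next I would spread these factors over the Frobenius orbit. Because $D$ has coefficients in $\ff_q$ and $\omega^{q}=\omega$, applying $x\mapsto x^{q^{k}}$ to the divisibility $(\gamma-\alpha\omega^m)^{e_\alpha}\mid D$ shows $(\gamma-\alpha^{q^{k}}\omega^m)^{e_\alpha}\mid D$ for every $0\le k<r$. The points $\alpha^{q^{k}}\omega^m$ (with $0\le k<r$, $0\le m\le t-s$) all lie in $S_\alpha$; since $|S_\alpha|=rt$ they are pairwise distinct, and since $S_\alpha\cap S_\beta=\emptyset$ for $\alpha\ne\beta$ in $\mathcal{F}$, the associated linear factors are pairwise coprime across all of $\mathcal{F}$. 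Therefore
\begin{equation*}
\prod_{\alpha\in\mathcal{F}}\prod_{k=0}^{r-1}\prod_{m=0}^{t-s}(\gamma-\alpha^{q^{k}}\omega^m)^{e_\alpha}\;\Big|\;D,
\end{equation*}
and comparing degrees gives $r(t-s+1)\sum_\alpha e_\alpha\le\deg D\le sd$, which is exactly the asserted bound.

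The one step carrying real content, and the one I expect to be the main obstacle, is the non-vanishing $D\not\equiv0$: without it the degree comparison is vacuous. This is precisely the statement that the folded Wronskian of $\ff_q$-linearly independent polynomials does not vanish identically, and it is here that the hypothesis on $\omega$ enters. I would establish it by viewing $P\mapsto P(\omega X)$ as an $\ff_q$-linear operator and arguing that, because $\omega$ generates $\ff_q^*$ and $s\le t\le d+1<q$ forces $\mathrm{ord}(\omega)=q-1\ge s$, the shifts $P_i(X),P_i(\omega X),\dots,P_i(\omega^{s-1}X)$ cannot be made simultaneously dependent, so the matrix defining $D$ is nonsingular for generic $\gamma$; alternatively I would simply invoke the corresponding folded-Wronskian lemma of~\cite{VGSK}. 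Everything else—the degree estimate, the multilinearity computation, and the Galois and disjointness bookkeeping—is routine.
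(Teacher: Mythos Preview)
The paper does not actually prove this theorem; it explicitly writes ``We do not repeat the proof here, for details see~\cite[pages~8--10]{VGSK},'' and only records the key object of that proof: the $t\times s$ matrix $M(X)=(P_j(X\omega^{i-1}))$, its top $s\times s$ determinant $L(X)$, and the bound $\deg L\le ds$. Your $D(\gamma)$ is exactly this $L$ (up to transposition), and your argument---factoring $(\gamma-\alpha^{q^k}\omega^m)^{e_\alpha}$ out of $D$ via multilinearity, using the disjointness of the sets $S_\alpha$ to make the factors coprime, invoking the folded-Wronskian non-vanishing, and comparing degrees---is precisely the Guruswami--Kopparty proof the paper is citing. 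So your proposal is correct and is the same approach the paper points to.
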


We do not repeat the proof here, for details see~\cite[pages~8--10]{VGSK}. The keystone of the proof of this theorem above is the following matrix. Let $W\le\ff^{d+1}_q$ be a subspace and let the polynomials $P_1,\dots,P_s$ constitute a basis of $W$. Define the following $t\times s$ matrix of polynomials:
\begin{equation*}
M(X)=\left[\begin{matrix}
P_1(X) & \dots & P_s(X)\\
P_1(X\omega) & \dots & P_s(X\omega)\\
\vdots & \ddots & \vdots \\
P_1(X\omega^{t-1}) & \dots & P_s(X\omega^{t-1})\\
\end{matrix}\right]
\end{equation*}
Let $A(X)$ be the top $s\times s$ submatrix of $M(X)$ and let $L(X)$ be the determinant of $A(X)$.

The term $d\cdot s$ in the parameter $\left(s,\frac{d\cdot s}{r(t-s+1)}\right)$ comes directly from the fact that the polynomial $L(X)$ has degree at most $d\cdot s$. We can give a better bound for this degree:

\begin{lem}
The polynomial $L(X)$ has degree at most $ds-\binom{s}{2}$.
\end{lem}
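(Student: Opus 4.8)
The plan is to exhibit a factorization of the matrix $A(X)$ that isolates all the $X$-dependence into a Vandermonde-type factor, and then to read off the degree bound from the Cauchy--Binet expansion. First I would write each basis polynomial as $P_\ell(X)=\sum_{m=0}^{d}c_{\ell,m}X^m$, so that the $(k,\ell)$ entry of $A(X)$ becomes $P_\ell(X\omega^k)=\sum_{m=0}^{d}c_{\ell,m}\,\omega^{km}X^m$, with rows indexed by $k=0,\dots,s-1$ and columns by $\ell=1,\dots,s$. This already shows that every entry has degree at most $d$ in $X$, which is the source of the trivial bound $\deg L\le ds$; the task is to improve it by exactly $\binom{s}{2}$.

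The key step is the observation that $A(X)=V\,D(X)\,C$, where $V$ is the $s\times(d+1)$ matrix with entries $V_{k,m}=\omega^{km}$, where $D(X)=\operatorname{diag}(1,X,\dots,X^d)$ is $(d+1)\times(d+1)$, and where $C$ is the $(d+1)\times s$ coefficient matrix with $C_{m,\ell}=c_{\ell,m}$. Setting $U=V\,D(X)$ (an $s\times(d+1)$ matrix with $U_{k,m}=\omega^{km}X^m$) and applying the Cauchy--Binet formula to $L(X)=\det(U\,C)$ yields
\[
L(X)=\sum_{S}\det\big(U_{\,\cdot,S}\big)\cdot\det\big(C_{S,\cdot}\big),
\]
the sum ranging over all $s$-element subsets $S\subseteq\{0,1,\dots,d\}$ of the inner index, where $U_{\,\cdot,S}$ and $C_{S,\cdot}$ denote the corresponding $s\times s$ submatrices.

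Next I would compute the $X$-degree of each summand. For $S=\{m_1<\dots<m_s\}$, pulling $X^{m_j}$ out of the $j$-th column of $U_{\,\cdot,S}$ gives $\det\big(U_{\,\cdot,S}\big)=X^{m_1+\dots+m_s}\det\big((\omega^{m_j})^{k}\big)_{k,j}$, a monomial in $X$ times a genuine Vandermonde determinant in the values $\omega^{m_1},\dots,\omega^{m_s}$ (distinct since $d+1<q$), while $\det(C_{S,\cdot})$ is a constant. Hence $L(X)$ is a linear combination of the monomials $X^{\sum_{m\in S}m}$, so that
\[
\deg L\le\max_{\,S\subseteq\{0,\dots,d\},\,|S|=s}\ \sum_{m\in S}m ,
\]
and the maximum is attained by the top $s$ indices $S=\{d,d-1,\dots,d-s+1\}$, giving $\sum_{m\in S}m=ds-\binom{s}{2}$. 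This is exactly the claimed bound, and I note that one never needs the Vandermonde factors or the minors of $C$ to be nonzero, only the monomial structure of the expansion.

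The only genuinely substantive step is spotting the factorization $A(X)=V\,D(X)\,C$; once it is in place, Cauchy--Binet does all the work, and the degree bound reduces to the elementary fact that the largest sum of $s$ distinct elements of $\{0,\dots,d\}$ is $ds-\binom{s}{2}$. An alternative, more hands-on route would expand $\det A(X)$ directly and show that its top coefficients vanish because the leading ($X^d$) coefficient vectors of the rows are all scalar multiples of the single vector $(c_{1,d},\dots,c_{s,d})$; iterating this rank-one cancellation across the top $s$ degrees recovers the same saving, but the Cauchy--Binet argument packages all of it at once, and I would prefer it.
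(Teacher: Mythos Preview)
Your proof is correct, but it takes a genuinely different route from the paper. The paper's argument is a one-liner: since $L(X)$ changes only by a nonzero scalar under change of basis of $W$, one may first replace $P_1,\dots,P_s$ by a basis with $\deg P_1<\deg P_2<\cdots<\deg P_s\le d$ (obtained by Gaussian elimination on the leading coefficients); then every term in the Leibniz expansion of $\det A(X)$ has degree exactly $\sum_\ell \deg P_\ell\le (d-s+1)+\cdots+d=ds-\binom{s}{2}$, and the bound follows immediately.

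Your approach instead keeps the basis fixed, factors $A(X)=V\,D(X)\,C$, and applies Cauchy--Binet. This is more machinery for the same bound, but it buys you something the paper's proof does not: an explicit expression of $L(X)$ as $\sum_S \bigl(\prod_{i<j}(\omega^{m_j}-\omega^{m_i})\bigr)\det(C_{S,\cdot})\,X^{\sum_{m\in S}m}$, which in particular identifies the leading coefficient as a Vandermonde times the top $s\times s$ minor of $C$. So while the paper's Gaussian-elimination trick is shorter and more elementary, your decomposition is more informative if one later wants to know when the degree bound is attained or to analyze lower-order terms. Your remark that the nonvanishing of the Vandermonde factors is irrelevant to the degree bound is correct and worth keeping.
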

\begin{proof}
The basis $P_1,\dots,P_s$ of the subspace $W\le\ff^{d+1}_q$ can be chosen (by Gau{ss}ian elimination) such that $\deg(P_1)<\deg(P_2)<\dots<\deg(P_s)\le d$ and thus, $\deg(L)\le d+\dots+\big(d-(s-1)\big)=ds-\frac{s(s-1)}{2}$.
\end{proof}

As a consequence, the Guruswami--Kopparty Theorem~\ref{thm:GKRS} above will have the following improved form.

\begin{cor}[Guruswami--Kopparty; improved version]
Using the notation above, the collection $\{H_\alpha|\alpha\in\mathcal{F}\}$ is a strong $\left(s,\frac{\left(d-\frac{s-1}{2}\right)s}{r(t-s+1)}\right)$ subspace design.
\end{cor}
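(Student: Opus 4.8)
The plan is to re-run the Guruswami--Kopparty argument behind Theorem~\ref{thm:GKRS} verbatim, altering only the single input where the degree of $L(X)$ enters, and then to feed in the sharper degree bound furnished by the Lemma above. The entire point is that the dimension $d$ influences the design parameter $A$ \emph{only} through $\deg L$; everything else in their counting contributes the denominator $r(t-s+1)$ and is insensitive to $d$.

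First I would recall how the proof in \cite[pages~8--10]{VGSK} bounds, for a fixed $q$-subspace $W$ of dimension $s$ with basis $P_1,\dots,P_s$, the quantity $\sum_{\alpha\in\mathcal{F}}\dim_q(H_\alpha\cap W)$. The determinant $L(X)=\det A(X)$ detects exactly those $\alpha$ for which the shifted evaluations of the $P_k$ become linearly dependent over $W$; the contribution of each relevant $\alpha$ is governed by the vanishing of $L$ along its Frobenius orbit, and summing over orbits produces a bound of the shape $\frac{\deg L}{r(t-s+1)}$. Here the factor $r$ accounts for the $r$ conjugates of each $\alpha$ (guaranteed by $\ff_q(\alpha)=\ff_{q^r}$), while $(t-s+1)$ reflects the window of $t$ consecutive shifts tested against the $s$-dimensional space $W$. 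Thus their argument in fact establishes the parameter $A=\frac{\deg L}{r(t-s+1)}$, and the value $ds$ in Theorem~\ref{thm:GKRS} is nothing but the crude estimate $\deg L\le ds$.

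Next I would simply invoke the Lemma above, which gives $\deg L\le ds-\binom{s}{2}$ once the basis $P_1,\dots,P_s$ is chosen by Gaussian elimination with strictly increasing degrees $\deg(P_1)<\dots<\deg(P_s)\le d$. Substituting this in place of $ds$ and rewriting
\begin{equation*}
ds-\binom{s}{2}=ds-\frac{s(s-1)}{2}=\left(d-\frac{s-1}{2}\right)s
\end{equation*}
immediately yields the improved parameter $A=\frac{\left(d-\frac{s-1}{2}\right)s}{r(t-s+1)}$, exactly as claimed.

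The hard part is not any new computation but the bookkeeping verification that $\deg L$ is genuinely the \emph{sole} source of $d$-dependence in the Guruswami--Kopparty bound — that is, that the Frobenius-orbit count and the shift-window estimate feed only into the denominator $r(t-s+1)$ and are otherwise blind to the degree of $L$. Confirming this demands a careful but routine reading of their proof; once it is in hand, the passage from Theorem~\ref{thm:GKRS} to the Corollary is purely formal, amounting to replacing one degree bound by a better one.
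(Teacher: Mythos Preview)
Your proposal is correct and matches the paper's approach exactly: the paper states the corollary without a separate proof, treating it as an immediate consequence of the preceding Lemma (the sharper bound $\deg L\le ds-\binom{s}{2}$) fed into the Guruswami--Kopparty argument, which is precisely what you do. Your additional remark that one must check $\deg L$ is the sole $d$-dependent input in their bound is a fair caveat, but the paper evidently regards this as understood from the structure of the cited proof.
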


This observation shows that the Guruswami--Kopparty construction of~\cite[Section~4]{VGSK} based on Folded Reed--Solomon codes actually give us a strong $(2,2d\!-\!1)$ subspace design, and thus, a set of $2d$ lines in higgledy-piggledy position.

\subsection{Construction of~\cite[Section~5]{VGSK}}

The main result of~\cite{VGSK} is also proved by the following construction which could be used only over large characteristics. We will again use $d$ instead of $m-1$. Let $0<s\le t\le d+1<\ch\ff_q$ be integer parameters and identify $\ff_q^{d+1}$ with the $\ff_q$-linear subspace of polynomials of degree $\le d$ in $\ff_q[X]$. For each $\alpha\in\ff_q$ let
\begin{equation*}
H_\alpha=\{P(X)\in\ff_q^{d+1}|\mult(P,\alpha)\ge t\}
\end{equation*}

\begin{thm}[Guruswami--Kopparty]\label{thm:GKMC}
{\rm\cite[Theorem~17]{VGSK}}
For every $\ff_q$-linear subspace $W\le\ff_q^{d+1}$ with $\dim(W)=s$
we have
\begin{equation*}
\sum_{\alpha\in\ff_q}\dim(H_\alpha\cap W)\le\frac{d\cdot s}{t-s+1}
\end{equation*}
\end{thm}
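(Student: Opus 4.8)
The plan is to mirror the degree argument of the previous subsection, but now for the multiplicity construction. The key object is again a polynomial whose degree controls the parameter $A$. Here the relevant quantity is $\sum_{\alpha}\dim(H_\alpha\cap W)$, so first I would fix an $\ff_q$-linear subspace $W\le\ff_q^{d+1}$ with $\dim(W)=s$ and, exactly as in the lemma above, choose a basis $P_1,\dots,P_s$ of $W$ by Gaussian elimination so that $\deg(P_1)<\deg(P_2)<\dots<\deg(P_s)\le d$. This staircase-of-degrees normalization is what produced the improvement $ds-\binom{s}{2}$ earlier, and I expect it to be the source of the sharper bound here as well.

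Next I would relate $\dim(H_\alpha\cap W)$ to a vanishing condition. A polynomial $P\in W$ lies in $H_\alpha$ exactly when $\mult(P,\alpha)\ge t$, i.e. $P,P',\dots,P^{(t-1)}$ all vanish at $\alpha$. Writing $P=\sum_k c_k P_k$, membership in $H_\alpha\cap W$ becomes a linear system in the coefficients $c_k$ governed by the $t\times s$ Wronskian-type matrix
\begin{equation*}
M_\alpha=\left[\begin{matrix}
P_1(\alpha) & \dots & P_s(\alpha)\\
P_1'(\alpha) & \dots & P_s'(\alpha)\\
\vdots & \ddots & \vdots\\
P_1^{(t-1)}(\alpha) & \dots & P_s^{(t-1)}(\alpha)
\end{matrix}\right],
\end{equation*}
so that $\dim(H_\alpha\cap W)=s-\mathrm{rank}(M_\alpha)$. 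The hypothesis $t\le d+1<\ch\ff_q$ guarantees the derivatives behave as in characteristic zero, so no collapse of the derivative operator occurs. The analogue of $L(X)$ is the Wronskian-like determinant $L(X)=\det A(X)$ of the top $s\times s$ block of the matrix with rows $P_k^{(i)}(X)$, and the key point is that $\dim(H_\alpha\cap W)$ is controlled by the order of vanishing of this Wronskian at $\alpha$.

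The heart of the argument is therefore the inequality $\sum_\alpha\dim(H_\alpha\cap W)\le\frac{1}{t-s+1}\deg(L)$ together with the degree bound $\deg(L)\le ds-\binom{s}{2}$. The degree bound is immediate from the staircase normalization (each column $P_k$ loses at most $d-(k-1)$ in degree after differentiating and assembling the determinant), just as in the previous lemma. The factor $\frac{1}{t-s+1}$ is the genuinely new ingredient: one must show that a point $\alpha$ contributing $\dim(H_\alpha\cap W)=j$ forces the Wronskian $L$ to vanish at $\alpha$ to order at least $j(t-s+1)$ — equivalently, that a $j$-dimensional space of polynomials all having multiplicity $\ge t$ at $\alpha$ creates a zero of the Wronskian of multiplicity at least $j(t-s+1)$. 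This is a standard Wronskian/Schubert-calculus estimate (the apparent ramification of a linear series), and I expect it to be \emph{the main obstacle}: one needs the precise bookkeeping that each extra dimension of common high-multiplicity vanishing at $\alpha$ costs $t-s+1$ in the order of vanishing of the determinant, which is where the characteristic hypothesis $\ch\ff_q>d$ is essential to avoid degenerate Wronskians.

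Finally, summing over all $\alpha\in\ff_q$ and using that the total order of vanishing of the nonzero polynomial $L$ is at most its degree gives
\begin{equation*}
(t-s+1)\sum_{\alpha\in\ff_q}\dim(H_\alpha\cap W)\le\deg(L)\le ds,
\end{equation*}
which yields the stated bound $\sum_\alpha\dim(H_\alpha\cap W)\le\frac{d\cdot s}{t-s+1}$ (and in fact the sharper $\frac{(d-\frac{s-1}{2})s}{t-s+1}$ via the staircase estimate). The only place requiring care is ensuring $L\not\equiv0$, which follows because $P_1,\dots,P_s$ are linearly independent and $\ch\ff_q>d\ge\deg P_s$, so their Wronskian does not vanish identically.
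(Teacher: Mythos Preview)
Your outline is correct and matches the approach the paper describes: the paper itself does not reproduce the proof of this theorem (it refers to \cite[pages~11--12]{VGSK}), but it lays out exactly the objects you use---the $t\times s$ Wronskian matrix $M(X)$, its top $s\times s$ minor $L(X)$, the identification $\dim(H_\alpha\cap W)=s-\mathrm{rank}\,M(\alpha)$, and the observation that the bound $\deg L\le ds$ is what produces the numerator $d\cdot s$. Your identification of the one nontrivial step (that a $j$-dimensional intersection forces $L$ to vanish at $\alpha$ to order at least $j(t-s+1)$) is exactly the piece the paper leaves to the reference.

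One small slip worth flagging: the sharper degree bound you quote, $ds-\binom{s}{2}$ and hence $A\le\frac{(d-\frac{s-1}{2})s}{t-s+1}$, is the one from the \emph{Folded Reed--Solomon} construction of the previous subsection. For the multiplicity construction here the paper does better: because the $i$-th row consists of $i$-th derivatives, each row already drops the degree by $i$, and combined with the column staircase this gives $\deg L\le s(d-s+1)$ and the improved parameter $A\le\frac{(d-s+1)s}{t-s+1}$ stated in the corollary. Your argument only exploits the column staircase, not the row drop from differentiation, which is why you undershoot the paper's improvement (though either bound is enough for the theorem as stated).
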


We do not repeat the proof here, for details see~\cite[pages~11--12]{VGSK}. The proof of this theorem uses the the following matrix. Let $W\le\ff^{d+1}_q$ be a subspace and let the polynomials $P_1,\dots,P_s$ constitute a basis of $W$. Define the following $t\times s$ matrix of polynomials:
\begin{equation*}
M(X)=\left[\begin{matrix}
P_1(X) & \dots & P_s(X)\\
P'_1(X) & \dots & P'_s(X)\\
\vdots & \ddots & \vdots \\
P^{(t-1)}_1(X) & \dots & P^{(t-1)}_s(X)\\
\end{matrix}\right]
\end{equation*}
Let $A(X)$ be the top $s\times s$ submatrix of $M(X)$ and let $L(X)$ be the determinant of $A(X)$.

The term $d\cdot s$ in the parameter $\frac{d\cdot s}{t-s+1}$ in Theorem~\ref{thm:GKMC} above comes from the fact $\deg(L(X))\le ds$. As in the previous subsection, there is again a better bound for this degree:

\begin{lem}
The polynomial $L(X)$ has degree at most $s(d-s+1)$.
\end{lem}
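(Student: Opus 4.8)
The plan is to recognize that $A(X)$ is precisely the Wronskian matrix of $P_1,\dots,P_s$: its $(i,j)$ entry is the $(i-1)$-th derivative $P_j^{(i-1)}(X)$. Two independent sources of degree saving are available, and the whole point is that they stack. First, exactly as in the previous subsection, I would invoke Gau{ss}ian elimination to replace the given basis by one in echelon form, so that $\deg P_1<\deg P_2<\dots<\deg P_s\le d$; this forces $\sum_{j=1}^s\deg P_j\le d+(d-1)+\dots+(d-s+1)=ds-\binom{s}{2}$. Second --- and this is the feature distinguishing the derivative construction of this subsection from the evaluation construction of the previous one --- differentiation lowers degree: the $i$-th row of $A(X)$ consists of $(i-1)$-th derivatives, so its entries have degree at most $\deg P_j-(i-1)$.

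Next I would expand the determinant as $L(X)=\sum_\sigma\mathrm{sgn}(\sigma)\prod_{i=1}^s P_{\sigma(i)}^{(i-1)}(X)$ and bound each summand. Since $\deg P_{\sigma(i)}^{(i-1)}\le\deg P_{\sigma(i)}-(i-1)$ for every permutation $\sigma$, every product term has degree at most $\sum_{i=1}^s\big(\deg P_{\sigma(i)}-(i-1)\big)=\sum_{j=1}^s\deg P_j-\binom{s}{2}$, the quantity $\sum_{i=1}^s(i-1)=\binom{s}{2}$ being independent of $\sigma$. Hence $\deg L\le\sum_j\deg P_j-\binom{s}{2}$. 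Combining the two bounds gives
\begin{equation*}
\deg L\le\Big(ds-\tbinom{s}{2}\Big)-\tbinom{s}{2}=ds-s(s-1)=s(d-s+1),
\end{equation*}
as claimed; each of the two effects contributes exactly one factor of $\binom{s}{2}$.

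The one point that needs care --- and the reason the echelon basis is worth the trouble --- is that neither saving suffices alone. The naive row-degree count without the derivative drop reproduces only the weaker bound $ds-\binom{s}{2}$ of the folded Reed--Solomon case, while the derivative drop applied to an arbitrary basis (degrees only known to be $\le d$) again yields only $ds-\binom{s}{2}$; it is essential to apply both simultaneously. Conveniently, the upper bound needs no hypothesis on the characteristic: differentiation never raises degree regardless of $\ch\ff$, so $\deg P_j^{(i-1)}\le\deg P_j-(i-1)$ holds unconditionally (a leading coefficient may vanish, but that only helps the estimate). Thus the main obstacle is conceptual rather than computational, namely noticing that the Wronskian structure permits stacking the two degree reductions.
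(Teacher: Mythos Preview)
Your proof is correct and follows essentially the same route as the paper's: expand the Wronskian determinant as a signed sum over permutations, observe that each product term has degree $\sum_j\deg P_j-\binom{s}{2}$ because the $i$-th row carries an $(i-1)$-fold derivative, and then choose the basis in echelon form so that $\sum_j\deg P_j\le ds-\binom{s}{2}$. Your added remark that the degree bound needs no characteristic hypothesis is a harmless extra observation not present in the paper.
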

\begin{proof}
Expanding the determinant $L(X)=\sum_{\pi\in S_s}(-1)^{I(\pi)}\prod_{k=1}^sP^{(k-1)}_{\pi(k)}(X)$, each term $\prod_{k=1}^sP^{(k-1)}_{\pi(k)}(X)$ has degree $\sum_{k=1}^s\big(\deg(P_{\pi(k)})-(k-1)\big)$, that is equal to $\sum_{k=1}^s\deg(P_k)-\binom{s}{2}$. The basis $P_1,\dots,P_s$ of the subspace $W\le\ff^{d+1}_q$ can be chosen (by Gau{ss}ian elimination) such that $\deg(P_1)<\deg(P_2)<\dots<\deg(P_s)\le d$ and thus,
\begin{align*}
\deg(L)\le&\left(\sum_i\deg(P_i)\right)-\binom{s}{2}\le\left(d+\dots+\big(d-(s-1)\big)\right)-\binom{s}{2}=\\
=&\left(sd-\binom{s}{2}\right)-\binom{s}{2}=ds-2\frac{s(s-1)}{2}=s(d-s+1).
\end{align*}
\end{proof}

As a consequence, the Guruswami--Kopparty Theorem~\ref{thm:GKMC} above will have the following improved form.

\begin{cor}[Guruswami--Kopparty; improved]
For every $\ff_q$-linear subspace $W\le\ff_q^{d+1}$ with $\dim(W)=s$ we have
\begin{equation*}
\sum_{\alpha\in\ff_q}\dim(H_\alpha\cap W)\le\frac{(d-s+1)s}{t-s+1}
\end{equation*}
\end{cor}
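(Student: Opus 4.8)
The plan is to re-run the proof of Theorem~\ref{thm:GKMC} essentially verbatim, altering only the single ingredient that carries the dependence on $d$. First I would recall the shape of the Guruswami--Kopparty argument for the multiplicity-code construction: for a fixed subspace $W$ with basis $P_1,\dots,P_s$ one forms the Wronskian-type matrix $M(X)$ and takes $L(X)=\det A(X)$, the determinant of its top $s\times s$ block (nonzero, since the $P_i$ are linearly independent and $\ch\ff_q>d+1\ge s$). The heart of their argument is a purely local multiplicity estimate: if $\dim(H_\alpha\cap W)$ elements of the basis can be chosen to vanish to order $\ge t$ at $\alpha$, then the derivatives populating the columns of $A(X)$ force $\mult(L,\alpha)\ge(t-s+1)\dim(H_\alpha\cap W)$. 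Summing this over $\alpha\in\ff_q$ and comparing with the total zero count of $L$ gives
\[
(t-s+1)\sum_{\alpha\in\ff_q}\dim(H_\alpha\cap W)\;\le\;\sum_{\alpha\in\ff_q}\mult(L,\alpha)\;\le\;\deg(L).
\]

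Next I would isolate precisely where the quantity $d\cdot s$ entered in the first place. As the surrounding text stresses, the factor $d\cdot s$ in the original parameter $\frac{ds}{t-s+1}$ is nothing but the crude degree bound $\deg(L)\le ds$; the construction of $M(X)$, the local multiplicity-counting at each $\alpha$, and the final summation are all insensitive to the actual value of $\deg(L)$ and use only that $L$ is a nonzero polynomial of the stated degree. Consequently every step remains valid when the estimate $\deg(L)\le ds$ is replaced by any sharper upper bound.

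Finally I would substitute the improved estimate from the preceding Lemma, namely $\deg(L)\le s(d-s+1)$, into the displayed inequality, obtaining
\[
\sum_{\alpha\in\ff_q}\dim(H_\alpha\cap W)\;\le\;\frac{\deg(L)}{t-s+1}\;\le\;\frac{s(d-s+1)}{t-s+1}\;=\;\frac{(d-s+1)s}{t-s+1},
\]
which is exactly the asserted bound.

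The only point requiring genuine care is the claim made in the second paragraph, that the Guruswami--Kopparty proof appeals to the size of $\deg(L)$ solely through the one inequality $\deg(L)\le ds$, with no further hidden use of the value $ds$. I would verify this by checking \cite[pages~11--12]{VGSK} and confirming that the multiplicity bound $(t-s+1)\dim(H_\alpha\cap W)\le\mult(L,\alpha)$ is established locally at each $\alpha$ from the vanishing of the entries $P_j^{(i)}$ there, hence is completely independent of the global degree of $L$, so that the degree bound is invoked exactly once, in the concluding summation. Granting this — which is the substance rather than an obstacle — the corollary is immediate.
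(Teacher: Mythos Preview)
Your proposal is correct and follows exactly the paper's approach: the paper explicitly notes that the term $d\cdot s$ in Theorem~\ref{thm:GKMC} comes solely from the bound $\deg(L)\le ds$, proves the sharper bound $\deg(L)\le s(d-s+1)$ in the preceding Lemma, and then states the corollary ``as a consequence'' without further argument. You have simply spelled out that consequence in more detail than the paper does.
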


These stronger versions of~\cite[Theorem~14]{VGSK} and \cite[Theorem~17]{VGSK} stated in this and the previous subsection implies a stronger version for the main~\cite[Theorem~7]{VGSK} as follows.

\begin{thm}[Guruswami--Kopparty; improved]
For all positive integers $s,r,t,m=d+1$ and prime powers $q$ satisfying $s\le t\le m<q$, there is an explicit collection of $M=\Omega\left(\frac{q^r}{rt}\right)$ linear subspaces $H_1,\dots,H_M\subset\ff_q^m$, each of co-dimension $rt$, which forms a strong $\left(s,A\right)$ subspace design, where $A\le\frac{\left(m-1-\frac{s-1}{2}\right)s}{r(t-s+1)}$, and even $A\le\frac{(m-s)s}{r(t-s+1)}$ if $m<\ch\ff_q$.
\end{thm}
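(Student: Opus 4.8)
The plan is to assemble the statement from the two improved corollaries of the preceding subsections, exploiting the fact that in both Guruswami--Kopparty constructions the design parameter $A$ is controlled \emph{solely} by the degree of the determinant polynomial $L(X)$, whereas the factor $r$ in the denominator $r(t-s+1)$ arises from a separate Frobenius-orbit (``folding'') count that is insensitive to this degree. Improving the degree bound therefore improves $A$ by exactly the same ratio, with everything else in \cite[Theorem~7]{VGSK} left untouched.

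First I would recall the mechanism by which $A$ is read off. For a subspace $W$ of dimension $s$, one picks a basis $P_1,\dots,P_s$ of strictly increasing degrees (Gaussian elimination) and bounds $\sum_\alpha\dim(H_\alpha\cap W)$ by the number of roots of the $s\times s$ minor-determinant $L(X)$; because $\ff_q(\alpha)=\ff_{q^r}$ and the sets $S_\alpha$ are disjoint of size $rt$, each relevant $\alpha$ drags in its $r$ Frobenius conjugates, which is precisely what yields the denominator $r(t-s+1)$ together with the count $M=\Omega(q^r/(rt))$. This orbit count uses $L(X)$ only through $\deg L(X)$, so substituting a sharper degree estimate propagates directly to $A$.

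For the first (unconditional) bound I would invoke the improved Folded Reed--Solomon corollary of \cite[Section~4]{VGSK}, where $\deg L\le ds-\binom{s}{2}=s\!\left(d-\tfrac{s-1}{2}\right)$. Setting $d=m-1$ and dividing by $r(t-s+1)$ gives $A\le\frac{\left(m-1-\frac{s-1}{2}\right)s}{r(t-s+1)}$, valid throughout the range $s\le t\le m<q$. For the sharper bound I would invoke the improved multiplicity-code corollary of \cite[Section~5]{VGSK}, whose Leibniz/Wronskian expansion yields the stronger estimate $\deg L\le s(d-s+1)=(m-s)s$; feeding this through the same folding count gives $A\le\frac{(m-s)s}{r(t-s+1)}$. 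Because that determinant is built from the derivatives $P^{(0)},\dots,P^{(t-1)}$ and the degree accounting requires each $P^{(k)}$ to drop degree by exactly $k$, this construction is available only when differentiation does not kill leading coefficients, i.e.\ under the hypothesis $m<\ch\ff_q$.

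The step I expect to be the main obstacle is confirming that the degree improvement is genuinely decoupled from the folding that produces the factor $r$. Concretely, the improved multiplicity corollary is stated for $r=1$ (denominator $t-s+1$), so one must check that the general-$r$ folded variant of \cite[Section~5]{VGSK}---evaluating derivatives over $\ff_{q^r}$ and grouping Frobenius conjugates exactly as in the Reed--Solomon case---still obeys the sharpened degree bound $s(d-s+1)$, so that the denominator becomes $r(t-s+1)$ without degrading the numerator. Once this compatibility is in place, both inequalities follow by substituting the respective degree estimates into the unchanged counting argument of \cite[Theorem~7]{VGSK}.
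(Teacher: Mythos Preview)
Your proposal is correct and follows exactly the paper's approach: the paper gives no separate proof for this theorem, merely stating that the two improved corollaries (the sharpened degree bounds $\deg L\le ds-\binom{s}{2}$ for the Folded Reed--Solomon construction and $\deg L\le s(d-s+1)$ for the multiplicity construction) plug directly into the original Guruswami--Kopparty argument for \cite[Theorem~7]{VGSK}. If anything, you are more explicit than the paper in isolating the one point that deserves checking---that the multiplicity-code degree bound, stated with denominator $t-s+1$, survives the Frobenius folding that introduces the factor $r$---whereas the paper simply asserts the implication.
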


So, we have shown that over large enough characteristic, the construction of~\cite[Section~5]{VGSK} based on multiplicity codes actually give us a strong $(2,2d-2)$ subspace design, and thus, a set of $2d-1$ lines in higgledy-piggledy position.

\section{Open questions}

As we have seen previously, subspace designs constructed by {Guruswami and Kopparty}~\cite{VGSK} can also give us hyperplane-generating set of lines of size $2d-1$ (if $\ch\ff>d+1$), the optimal size over algebraically closed field. But examples in low dimensions show that much smaller hyperplane-generating sets of lines could exist, if the field is finite.

\paragraph{Open problem 1}
While hunting for weak and strong $(s,A)$ subspace designs aims subspace designs of cardinality as large as possible (while $s$ and $A$ are constants), our problem is to find as small as possible hyperplane-generating sets of lines, which are weak $(2,N\!-\!1)$ subspace designs of cardinality $N$ where $N$ is as small as possible. In this article we have proved that (if the field $\ff$ has at least $1.5d$ elements, then) a generator set $\hL$ of lines in $\PG(d,\ff)$ has to contain at least $\left\lfloor\frac{d}{2}\right\rfloor+d$ elements. Open problem is to find minimal size hyperplane-generating sets of lines over fields that are not algebraically closed.

\paragraph{Open problem 2}
A natural generalization of the hyperplane-generating sets of lines would be the following. A set $\hL$ of $k$ subspaces is said to be \emph{generating set} (or set of $k$ subspaces in `higgledy-piggledy' position) if each subspace $H$ of \emph{co-dimension $k$} meet $\hL$ in a set of points that generates $H$. Open question is the minimal size of a set of $k$ subspaces in `higgledy-piggledy' position.

\paragraph{Open problem 3}
We have shown that the Guruswami--Kopparty construction based on multiplicity codes gives stronger results than the construction based on Folded Reed--Solomon codes, in case $m<\ch\ff_q$. We conjecture that using the generalization of our trick of `diverting' the tangents of the moment curve (shown in Subsection~\ref{subs:div}), can generalize this Guruswami--Kopparty constructions over small characteristics, and thus, the main Guruswami--Kopparty Theorem can be improved over fields of small characteristics.

\end{document}